\newtheorem{theorem}{Theorem}
\newtheorem{lemma}[theorem]{Lemma}
\newtheorem{proposition}[theorem]{Proposition}
\theoremstyle{definition}
\newtheorem{example}[theorem]{Example}
\newtheorem{remark}[theorem]{Remark}
\renewcommand{\labelenumi}{(\roman{enumi})}
\newcommand{\bk}{{\bf k}}
\newcommand{\bp}{{\bf b}_+}
\newcommand{\bm}{{\bf b}_-}
\newcommand{\bpm}{{\bf b}_\pm}
\newcommand{\g}{g}
\newcommand{\Z}{{\mathbb Z}}
\newcommand{\R}{{\mathbb R}}
\newcommand{\C}{{\mathbb C}}
\begin{document}

\title[$U_q(A^{(1)}_2)$-zero range process]
{Matrix product formula for $\boldsymbol{U_q(A^{(1)}_2)}$-zero range process}
\author{Atsuo Kuniba}
\email{atsuo@gokutan.c.u-tokyo.ac.jp}
\address{Institute of Physics, University of Tokyo, Komaba, Tokyo 153-8902, Japan}

\author{Masato Okado}
\email{okado@sci.osaka-cu.ac.jp}
\address{Department of Mathematics, Osaka City University, 
3-3-138, Sugimoto, Sumiyoshi-ku, Osaka, 558-8585, Japan}


\maketitle

\begin{center}{\bf Abstract}\end{center}

The $U_q(A^{(1)}_n)$-zero range processes 
introduced recently 
by Mangazeev, Maruyama and the authors are integrable 
discrete and continuous time Markov processes 
associated with the stochastic $R$ matrix
derived from the well-known $U_q(A_n^{(1)})$ quantum $R$ matrix.
By constructing a representation of the relevant 
Zamolodchikov-Faddeev algebra,
we present, for  $n=2$, a matrix product formula 
for the steady state probabilities
in terms of $q$-boson operators.
\vspace{0.3cm}

\section{Introduction and main result}\label{sec:int}

Zero range processes \cite{S} are stochastic dynamical models 
for a variety of systems 
in biology, chemistry, networks, physics, sociology, traffic flows and so forth.
Investigating their rich behaviors like 
condensation, current fluctuations and hydrodynamic limit, etc 
has been a prominent theme in mathematical physics of 
non-equilibrium phenomena. 
See for example \cite{EH, GSS, KL} and the references therein.

In the recent work \cite{KMMO},
new integrable Markov processes 
associated with the quantum affine algebra $U_q(A^{(1)}_n)$ \cite{D,J}
have been constructed.
They are described naturally in terms of discrete and continuous time 
stochastic dynamics of $n$-species of particles on 
a ring obeying a zero range type interaction.
We call them $U_q(A^{(1)}_n)$-zero range processes (ZRPs) in this paper.
Here is a snapshot of the system for the $n=2$ case on the $L$ site periodic chain:

\setlength{\unitlength}{1mm}
\begin{picture}(100,35)(-5,33)
\thicklines
\put(50,50){\ellipse{80}{20}}

\put(12,51){\line(0,1){3.8}} 
\put(20,54.4){\line(0,1){3.8}}
  \put(22,58){$\bullet$}\put(24,58.5){$\bullet$}\put(26,58.8){$\circ$}

\put(30,56.4){\line(0,1){3.8}}
\put(40,57.4){\line(0,1){3.8}}
   \put(42.5,60.7){$\circ\,\circ$}

\put(50,58){\line(0,1){3.8}}
  \put(52.3,60.5){$\bullet$} \put(54.5,60.5){$\circ$}\put(56.5,60.5){$\circ$}
  
\put(60,57.6){\line(0,1){3.8}}

\put(70,56.5){\line(0,1){3.8}}
  \put(71.6,58.7){$\circ$}\put(73.6,58.5){$\circ$}\put(75.6,58.1){$\bullet$}
  \put(72.6,60.5){$\circ$}\put(77.7,57.6){$\bullet$}

\put(80,54.7){\line(0,1){3.8}}
  \put(82.2,56.5){$\bullet$}\put(84.2,55.7){$\circ$}

\put(87,51.5){\line(0,1){3.8}}

\put(12,45){\line(0,1){4}}
  \put(14.2,46){$\bullet$}\put(16,45.3){$\bullet$}

\put(20,41){\line(0,1){4}}
\put(30,39){\line(0,1){4}}
  \put(34,41.5){$\circ$}

\put(35,36.1){$\cdot$}
\put(36.5,35.9){$\cdot$}
\put(38,35.8){$\cdot$}
\put(39.5,35.7){$\cdot$}
\put(41,35.6){$\cdot$}
\put(42.5,35.6){$\cdot$}

\put(40,38){\line(0,1){4}}
\put(-0.5,0){
 \put(42,41){$\circ \circ$}\put(42,43){$\circ \circ$}\put(43,45){$\circ$}
 \put(45.8,41){$\bullet \bullet$}\put(46.5,43){$\bullet$}}

\put(50,38){\line(0,1){4}}
  \put(54,41){$\bullet$}

\put(60,38){\line(0,1){4}}
  \put(62,41.4){$\circ$}\put(64,41.6){$\circ$}
  \put(62.6,43.5){$\circ$}\put(66,41.8){$\bullet$}

\put(70,39){\line(0,1){4}}
  \put(72.5,42.6){$\circ$}\put(74.5,43){$\bullet$}

\put(80,41){\line(0,1){4}}
\put(87,44){\line(0,1){4}}

\put(14,41){$\sigma_L$}
\put(23,38){$\sigma_1$}
\put(52.5,35){$\sigma_{i-1}$}
\put(63.7,35.5){$\sigma_i$}
\put(73,37){$\sigma_{i+1}$}

\put(83.5,38.5){$\cdot$}
\put(85,39.1){$\cdot$}
\put(86.3,39.7){$\cdot$}
\put(87.6,40.7){$\cdot$}
\put(88.6,41.7){$\cdot$}

\put(59.5,46.5){\oval(7,7)[t]}\put(56.1,45.5){\vector(0,-1){1}}
\put(68.5,47){\oval(7,7)[t]}\put(72.2 ,46.5){\vector(0,-1){1}}

\put(0,-6){
\put(100,66){$\circ\,$ particle of species 1}
\put(100,61){$\bullet\,$ particle of species 2}

\put(104,52){$\sigma_{i-1}=(0,1)$,}
\put(107.5,48){$\sigma_i=(3,1)$,}
\put(104,44){$\sigma_{i+1}=(1,1)$.}
}

\end{picture}
\setlength{\unitlength}{1pt}

\noindent
For $n$ general, a local state at site $i \in \Z_L$ is an array
$\sigma_i=(\sigma_{i,1},\ldots, \sigma_{i,n}) 
\in \Z^n_{\ge 0}$ signifying that 
there are $\sigma_{i,s}$ particles of species $s\,(1 \le s \le n)$.
There is no constraint on the particles occupying a site.
In the continuous time version of the model,
they can hop either to the right or to the left adjacent sites with 
a zero range interaction, which means that
the local transition rate depends on the occupancy of the 
departure site only and not on the destination site.
For the $U_q(A^{(1)}_2)$-ZRP, 
the rate for the hopping of 
$\gamma_1+\gamma_2$ particles is given 
(when $\gamma_1+\gamma_2\ge 1$) as

\setlength{\unitlength}{0.7mm}
\begin{picture}(200,72)(-5,-1)
\put(0,48){$\displaystyle{
a \frac{q^{(\alpha_1-\gamma_1)\gamma_2}\mu^{\gamma_1+\gamma_2-1}
(q)_{\gamma_1+\gamma_2-1}}
{(\mu q^{\alpha_1+\alpha_2-\gamma_1-\gamma_2})_{\gamma_1+\gamma_2}}
\frac{(q)_{\alpha_1}}{(q)_{\gamma_1}(q)_{\alpha_1-\gamma_1}}
\frac{(q)_{\alpha_2}}{(q)_{\gamma_2}(q)_{\alpha_2-\gamma_2}}}$ \;\;\;\;\;\;for
\put(10,-10){
\drawline(0,5)(0,0)
\drawline(0,0)(50,0)\drawline(50,0)(50,5)
\drawline(25,0)(25,5)
\put(-1,0){
\drawline(12,12)(12,17)\drawline(12,17)(37,17)\put(37,17){\vector(0,-1){5}}
}
\put(3,2){$\overbrace{\circ ... \circ}^{\alpha_1}
\overbrace{\bullet...\bullet}^{\alpha_2}$}
\put(28,2){$\circ ... \circ \bullet...\bullet$}
\put(14, 19){$\overbrace{\circ ... \circ}^{\gamma_1}
\overbrace{\bullet...\bullet}^{\gamma_2}$}}

}
\put(0,10){$\displaystyle{
b \frac{q^{\gamma_1(\beta_2-\gamma_2)}
(q)_{\gamma_1+\gamma_2-1}}
{(\mu q^{\beta_1+\beta_2-\gamma_1-\gamma_2})_{\gamma_1+\gamma_2}}
\frac{(q)_{\beta_1}}{(q)_{\gamma_1}(q)_{\beta_1-\gamma_1}}
\frac{(q)_{\beta_2}}{(q)_{\gamma_2}(q)_{\beta_2-\gamma_2}}}$
 \;\;\;\;\;\; \;\;\;\;\;\;\;\;\;\;\;\;for

\put(10,-8){
\drawline(0,5)(0,0)
\drawline(0,0)(50,0)\drawline(50,0)(50,5)
\drawline(25,0)(25,5)
%
\put(-1,0){
\drawline(37,12)(37,17)\drawline(37,17)(12,17)\put(12,17){\vector(0,-1){5}}
}
\put(28,2){$\overbrace{\circ ... \circ}^{\beta_1}
\overbrace{\bullet...\bullet}^{\beta_2}$}
\put(3,2){$\circ ... \circ \bullet...\bullet$}
\put(14, 19){$\overbrace{\circ ... \circ}^{\gamma_1}
\overbrace{\bullet...\bullet}^{\gamma_2}$}}

}
\end{picture}
\setlength{\unitlength}{1pt}

\noindent
Here $a,b,\mu, q$ are the parameters of the model and 
the symbol $(z)_m$ is defined in the end of this section.
These are the $n=2$ cases of (\ref{ctmp2})--(\ref{rate2}) with $\epsilon=1$.

The $U_q(A^{(1)}_n)$-ZRPs \cite{KMMO} contain the 
earlier proposed $n$ species models 
\cite{KMO1,KMO2,T} via various specialization of the parameters. 
In particular for $n=1$, they reproduce the single species models
studied in \cite{SW,P,T0,CP,BP} up to boundary conditions. 
A more detailed explanation is available in Section \ref{ss:sra}.

The above transition rate has been chosen so as to guarantee the 
integrability, or put more practically the Bethe ansatz solvability of the model
via the {\em stochastic $R$ matrix} $\mathscr{S}(\lambda,\mu)$ \cite{KMMO}.
It originates in the quantum $R$ matrix for 
the symmetric tensor representations of $U_q(A^{(1)}_n)$, a basic example of  
higher-spin representations of higher-rank quantum groups, 
in the special gauge that allows a probabilistic interpretation.
Its nonzero matrix elements 
$\mathscr{S}(\lambda,\mu)^{\gamma, \delta}_{\alpha, \beta}$ are 
described by the function 
$\Phi_q(\gamma | \beta; \lambda,\mu)$ 
$(\beta, \gamma \in \Z^n_{\ge 0} )$
defined in (\ref{mho}) as 
\begin{align*}
q^{\sum_{1 \le i < j \le n}(\beta_i-\gamma_i)\gamma_j}
\left(\frac{\mu}{\lambda}\right)^{\gamma_1+\cdots + \gamma_n}
\frac{(\lambda)_{\gamma_1+\cdots + \gamma_n}
(\frac{\mu}{\lambda})_{\beta_1+\cdots + \beta_n
-\gamma_1-\cdots - \gamma_n}}
{(\mu)_{\beta_1+\cdots + \beta_n}}
\prod_{i=1}^{n}\frac{(q)_{\beta_i}}
{(q)_{\gamma_i}(q)_{\beta_i-\gamma_i}}.
\end{align*}
For $n=1$ it reduces to the transition rate in the chipping model \cite{P}, which
was also built in the explicit formulas 
of the $R$ matrix and $Q$ operators for $U_q(A^{(1)}_1)$ \cite{M1}.
The parameters $\lambda$ and $\mu$ 
are reminiscents of the degrees of the relevant 
symmetric tensor representations of $U_q(A^{(1)}_n)$.
They play a role analogous to the 
spectral parameter although the difference property
$\mathscr{S}(c\lambda,c\mu) = \mathscr{S}(\lambda, \mu)$ is {\em absent}.

As the usual vertex models in equilibrium statistical mechanics \cite{Bax}, 
the stochastic $R$ matrix $\mathscr{S}(\lambda, \mu_i)$
serves as a building block of the commuting {\em Markov transfer matrix}
$T(\lambda|\mu_1,\ldots, \mu_L)$ (\ref{ngm})
for the discrete time process. 
It is governed by the master equation
\begin{align*}
|P(t+1)\rangle = T(\lambda|\mu_1,\ldots, \mu_L)
|P(t)\rangle
\end{align*}
with the time variable $t$,
where $\mu_i$ is the inhomogeneity
assigned to each lattice site $i \in \Z_L$.
The discrete time ZRP covers the 
continuous time one in the sense that 
the Markov matrix of the latter is derived from the 
homogeneous case $T(\lambda|\mu,\ldots, \mu)$ of the former 
by the logarithmic derivative (\ref{bax})
as in the well-known Baxter's formula 
for spin chain Hamiltonians \cite[Sec.10.14]{Bax}.
A novel feature of the $U_q(A^{(1)}_n)$ 
Markov transfer matrix $T(\lambda|\mu,\ldots, \mu)$ is the presence of 
{\em two} natural ``Hamiltonian points" $\lambda=1$ and $\lambda=\mu$
yielding the two continuous time Markov matrices 
$H^{(1)}$ and $H^{(2)}$ in (\ref{ctmp2}).
They give rise to the right and the left moving 
particles whose {\em mixture} is still integrable thanks to $[H^{(1)}, H^{(2)}]=0$. 
These aspects have been demonstrated in detail in \cite[Sec.3.4]{KMMO}. 

In this paper we study the steady states of the 
$U_q(A^{(1)}_n)$-ZRPs.
By definition, steady states are those $|\overline{P}\rangle$ satisfying
$|\overline{P}\rangle= T(\lambda|\mu_1,\ldots, \mu_L)|\overline{P}\rangle$.
It exists uniquely in each sector specified 
by the total number of particles of each species, and serves as a basic characteristics
of the system analogous to the ground states in the equilibrium spin chain models.
Let ${\mathbb P}(\sigma_1,\ldots, \sigma_L)$ be the probability of finding the 
system in the configuration $(\sigma_1,\ldots, \sigma_L) \in (\Z^n_{\ge 0})^L$ 
in a steady state up to an overall normalization. 
For $n=1$, it is known to become a product of 
on-site (albeit inhomogeneous) factors \cite{P,EMZ} as 
\begin{align*}
{\mathbb P}(\sigma_1,\ldots, \sigma_L)
= \prod_{i=1}^L \g_{\sigma_i}(\mu_i),
\end{align*}
where $g_{\sigma_i}(\mu)$ is defined by (\ref{mgd}) for 
$\sigma_i \in \Z_{\ge 0}^n$ with general $n$.
Such a factorization, however, is
no longer valid in the multispecies case 
$n\ge 2$ as observed in \cite[Example 13]{KMMO},
and this becomes a source of interest in the present model even 
without an introduction of a reservoir; 
particles of a given species must behave under the influence of 
the other species ones acting as a nontrivial dynamical background.  
Our main result in this paper is the following matrix product formula for $n=2$:
\begin{equation}\label{okne}
\begin{split}
&{\mathbb P}(\sigma_1,\ldots, \sigma_L)
= \mathrm{Tr}(X_{\sigma_1}(\mu_1)\cdots X_{\sigma_L}(\mu_L)),
\\
&X_\alpha(\mu) = \g_\alpha(\mu) Z_\alpha(\mu),
\quad 
Z_\alpha(\mu) = \left(\, \prod_{m=0}^\infty
\frac{1-q^m\bp}{1-q^m\mu^{-1}\bp}\right)
\bk^{\alpha_2}\bm^{\alpha_1}. 
\end{split}
\end{equation}
Here $\alpha=(\alpha_1, \alpha_2)\in \Z^2_{\ge 0}$ and 
$\bp, \bm, \bk$ are $q$-boson operators (\ref{akn}) acting
on the Fock space over which the trace is to be taken.
One sees that the $n=1$ case formally corresponds to setting $Z_\alpha(\mu)=1$.
In this sense the above $Z_\alpha(\mu)$ is capturing    
the first multispecies effect beyond $n=1$, and it has been identified with  
a quantum dilogarithm or 
a single mode $q$-boson vertex operator
$Z_\alpha(\mu) 
= \exp\left(\sum_{l\ge 1}\frac{\mu^{-l}-1}{l(1-q^l)}\bp^l \right)
\bk^{\alpha_2}\bm^{\alpha_1}$.
It is curiously asymmetric with respect to $\alpha_1$ and $\alpha_2$
despite that the original transition rate looks fairly symmetric 
between the two species.  

There are many matrix product formulas in terms of bosons
known in the literature for similar models typically like exclusion processes.
See \cite{AL, BE, CRV, KMO0, PEM} for example and the references therein.
Our formula (\ref{okne}) is the first example distinct from them involving  
an {\em infinite product} of $q$-bosons.

In order to establish the above result,  we
invoke the so called {\em Zamolodchikov-Faddeev (ZF) algebra} \cite{Z,F}
having the stochastic $R$ matrix as the structure function. 
It reads symbolically as
\begin{align*}
X(\mu) \otimes X(\lambda) = 
\check{\mathscr{S}}(\lambda, \mu)\bigl[
X(\lambda) \otimes X(\mu)\bigr],
\end{align*}
where $\check{\mathscr{S}}(\lambda, \mu)$ is defined after (\ref{psum}).
See (\ref{msk}) for the concrete description.
It is a local version of the stationary condition, and 
plays a central role in deriving the matrix product formula.
Besides the ZF algebra however, 
we need one further essential ingredient 
which we call the {\em auxiliary condition} (\ref{koi})
on the operator $X_\alpha(\mu)$.
Its formulation is another important result in this paper which 
contrasts with the simpler situation of continuous time models on a ring 
(cf. \cite{SW0, AL, CRV, KMO0})
where the ZF algebra alone sufficed together with its derivative.
We find in the proof of Proposition \ref{pr:srn} that 
the $U_q(A^{(1)}_n)$ stochastic $R$ matrix fits the auxiliary condition perfectly
by an intriguing mechanism.
Consequently the matrix product construction for the arbitrary $n$
and the inhomogeneity 
$\mu_1,\ldots, \mu_L$ is attributed 
to the task of realizing such an operator $X_\alpha(\mu)$ concretely.
The result (\ref{okne}) is an outcome of this exercise for $n=2$.
The general $n$ case is also feasible and will be presented elsewhere. 

The outline of the paper is as follows.
In Section \ref{sec:ykn}
we recall the necessary facts on the $U_q(A^{(1)}_n)$-ZRPs in this paper.
The discrete and continuous time versions  
are those defined in section 3.3 and section 3.4 in \cite{KMMO}, respectively.
In Section \ref{sec:hnka} the matrix product formula for the steady state probabilities 
are linked with the ZF algebra (Proposition \ref{pr:srn}).
The auxiliary condition (\ref{koi}) or equivalently (\ref{aaa}) 
plays a key role.
Until this point all the arguments are valid for general $n$.
In Section \ref{sec:rna} we focus on the $n=2$ case
and present a concrete realization of the ZF algebra (Theorem \ref{th:nzm})
satisfying all the criteria in Proposition \ref{pr:srn}.
It leads to the matrix product formulae (\ref{mzs}) and (\ref{msm}),
which are the main results of the paper.
Section \ref{sec:tmn} contains a summary and discussion.
Systematic applications to the study of physical behaviors 
is a subject of a future research. 

Throughout the paper we use 
the notation
$\theta(\mathrm{true})=1, 
\theta(\mathrm{false}) =0$,
the $q$-Pochhammer symbol
$(z)_m = (z; q)_m = \prod_{j=1}^m(1-zq^{j-1})$
and the $q$-binomial 
$\binom{m}{k}_{\!q} = \theta(k \in [0,m])
\frac{(q)_m}{(q)_k(q)_{m-k}}$.
The symbols $(z)_m$ appearing in this paper always mean $(z; q)_m$.
For integer arrays 
$\alpha=(\alpha_1,\ldots, \alpha_m), \beta=(\beta_1,\ldots, \beta_m)$ 
of {\em any} length $m$, we write 
$|\alpha | = \alpha_1+\cdots  + \alpha_m$.  
The relation $\alpha \le \beta$ or equivalently  
$\beta \ge \alpha$ is defined by $\beta-\alpha \in \Z^m_{\ge 0}$. 
We often denote by $0$ to mean 
$(0, \ldots, 0) \in \Z^m_{\ge 0}$ for some $m$ when it is clear from the 
context.

\section{$U_q(A^{(1)}_n)$-zero range processes}\label{sec:ykn}
Let us briefly recall the stochastic $R$ matrix for $U_q(A^{(1)}_n)$ and the associated 
discrete and continuous time ZRPs constructed in 
\cite[Sec. 3.3, 3.4]{KMMO}.

\subsection{Stochastic $R$ matrix}
Set 
$W = \bigoplus_{\alpha=(\alpha_1,\ldots, \alpha_n) \in \Z_{\ge 0}^n}
\C |\alpha\rangle$.
Define the operator $\mathscr{S}(\lambda,\mu) \in \mathrm{End}(W \otimes W)$ 
depending on the parameters $\lambda$ and $\mu$ by
\begin{align}
&\mathscr{S}(\lambda,\mu)(|\alpha\rangle \otimes |\beta\rangle ) = 
\sum_{\gamma,\delta \in 
\Z_{\ge 0}^n}\mathscr{S}(\lambda,\mu)_{\alpha,\beta}^{\gamma,\delta}
\,|\gamma\rangle \otimes |\delta\rangle,
\label{ask1}\\
&\mathscr{S}(\lambda,\mu)^{\gamma,\delta}_{\alpha, \beta} 
= \theta(\gamma+\delta=\alpha+\beta)
\Phi_q(\gamma | \beta; \lambda,\mu), \label{ask2}
\end{align}
where $\Phi_q(\gamma | \beta; \lambda,\mu)$ is given by

\begin{align}
\Phi_q(\gamma|\beta; \lambda,\mu)  = 
q^{\varphi(\beta-\gamma, \gamma)} \left(\frac{\mu}{\lambda}\right)^{|\gamma|}
\frac{(\lambda)_{|\gamma|}(\frac{\mu}{\lambda})_{|\beta|-|\gamma|}}
{(\mu)_{|\beta|}}
\prod_{i=1}^{n}\binom{\beta_i}{\gamma_i}_{\!q},
\qquad
\varphi(\alpha, \beta) &= \sum_{1 \le i < j \le n}\alpha_i\beta_j. \label{mho}
\end{align}
The sum (\ref{ask1}) is finite due to the $\theta$ factor in (\ref{ask2}).
In fact the direct sum decomposition
$W \otimes W = \bigoplus_{\gamma \in \Z_{\ge 0}^n}
\left(\bigoplus_{\alpha+\beta=\gamma}\C
|\alpha\rangle \otimes |\beta\rangle\right)$ holds
and $\mathscr{S}(\lambda,\mu)$ splits into the corresponding submatrices. 
Note also that 
$\mathscr{S}(\lambda,\mu)^{\gamma,\delta}_{\alpha, \beta}=0$ 
unless $\gamma \le \beta$ and therefore $\alpha \le \delta$ as well.
The difference property 
$\mathscr{S}(\lambda,\mu)=\mathscr{S}(c\lambda,c\mu)$ is absent.
We call $\mathscr{S}(\lambda, \mu)$ 
the {\em stochastic $R$ matrix}.
Its elements are depicted as
\begin{equation}\label{vertex}
\begin{picture}(200,45)(-90,-21)
\put(-70,-2){$\mathscr{S}(\lambda,\mu)^{\gamma,\delta}_{\alpha, \beta}\,=$}
\put(0,0){\vector(1,0){24}}
\put(12,-12){\vector(0,1){24}}
\put(-10,-2){$\alpha$}\put(27,-2){$\gamma$}
\put(9,-22){$\beta$}\put(9,16){$\delta$}
\end{picture}
\end{equation}
The stochastic $R$ matrix originates in the 
quantum $R$ matrix of the symmetric tensor representation of 
the quantum affine algebra $U_q(A^{(1)}_n)$ \cite{D, J}.
It satisfies the Yang-Baxter equation, 
the inversion relation and the sum-to-unity condition \cite{KMMO}:
\begin{align}
&\mathscr{S}_{1,2}(\nu_1,\nu_2)
\mathscr{S}_{1,3}(\nu_1, \nu_3)
\mathscr{S}_{2,3}(\nu_2, \nu_3)
=
\mathscr{S}_{2,3}(\nu_2, \nu_3)
\mathscr{S}_{1,3}(\nu_1, \nu_3)
\mathscr{S}_{1,2}(\nu_1,\nu_2),
\label{sybe}\\
&\check{\mathscr{S}}(\lambda, \mu)
\check{\mathscr{S}}(\mu,\lambda)
= \mathrm{id}_{W^{\otimes 2}},
\label{sinv}\\
&\sum_{\gamma, \delta \in \Z_{\ge 0}^n}
\mathscr{S}(\lambda,\mu)^{\gamma,\delta}_{\alpha, \beta} =1\qquad
(\forall \alpha, \beta \in \Z_{\ge 0}^n),
\label{psum}
\end{align}
where the checked stochastic $R$ matrix is defined by 
$\check{\mathscr{S}}(\lambda,\mu)(|\alpha\rangle \otimes |\beta\rangle ) = 
\sum_{\gamma,\delta}
\mathscr{S}(\lambda,\mu)_{\alpha,\beta}^{\gamma,\delta}
\,|\delta\rangle \otimes |\gamma\rangle$.

Let $\mathscr{S}^T(\lambda,\mu)$ be the transpose of 
$\mathscr{S}(\lambda,\mu)$, i.e.,
\begin{align*}
\mathscr{S}^T(\lambda,\mu)(|\alpha\rangle \otimes |\beta\rangle ) = 
\sum_{\gamma,\delta \in 
\Z_{\ge 0}^n}\mathscr{S}(\lambda,\mu)^{\alpha,\beta}_{\gamma,\delta}
\,|\gamma\rangle \otimes |\delta\rangle.
\end{align*}
It satisfies the same Yang-Baxter equation as (\ref{sybe}):
\begin{align}
\mathscr{S}^T_{1,2}(\nu_1,\nu_2)
\mathscr{S}^T_{1,3}(\nu_1, \nu_3)
\mathscr{S}^T_{2,3}(\nu_2, \nu_3)
=
\mathscr{S}^T_{2,3}(\nu_2, \nu_3)
\mathscr{S}^T_{1,3}(\nu_1, \nu_3)
\mathscr{S}^T_{1,2}(\nu_1,\nu_2).
\label{tybe}
\end{align}
To see this, note the identity
\begin{align}
&\mathscr{S}(\lambda,\mu)_{\alpha,\beta}^{\gamma,\delta}=
\mathscr{S}(\lambda,\mu)^{\alpha',\beta'}_{\gamma',\delta'}
\frac{\tilde{\g}_\gamma(\lambda)\tilde{\g}_\delta(\mu)}
{\tilde{\g}_\alpha(\lambda)\tilde{\g}_\beta(\mu)}
q^{\varphi(\beta,\alpha)-\varphi(\gamma,\delta)},
\label{osr}\\
&
\tilde{\g}_\alpha(\mu) = \g_\alpha(\mu)q^{-\varphi(\alpha,\alpha)},
\qquad
\g_\alpha(\mu)=\frac{\mu^{-|\alpha|}(\mu)_{|\alpha|}}
{\prod_{i=1}^n(q)_{\alpha_i}},
\label{mgd}
\end{align}
where $\alpha'=(\alpha_n,\ldots, \alpha_1)$ is the reverse ordered array of 
$\alpha=(\alpha_1,\ldots, \alpha_n)$.
One can easily check that the extra factors in the RHS of (\ref{osr})
are gauge freedom not spoiling the Yang-Baxter equation.
The factor 
$\g_\alpha(\mu)$, which will appear frequently in the sequel, is a piece of 
the function $\Phi_q(\gamma | \beta; \lambda, \mu)$
in the following sense:
\begin{align}\label{gkn}
q^{\varphi(\beta,\gamma)}
\frac{\g_\beta(\mu) \g_\gamma(\lambda)}
{\g_{\beta+\gamma}(\mu)}
\mathscr{S}^{0,\alpha}_{\delta, \beta}(\lambda, \mu)
=\mathscr{S}^{\gamma,\alpha}_{\delta, \beta+\gamma}(\lambda, \mu).
\end{align}

The function 
$\Phi_q(\gamma | \beta; \lambda, \mu)|_{n=1}$ 
appeared earlier in \cite{M1, P}. 
For $n$ general it is zero unless $\gamma \le \beta$, and 
satisfies the sum rule \cite{KMMO}:
\begin{align}\label{syk}
\sum_{\gamma \in \Z_{\ge 0}^n}
\Phi_q(\gamma | \beta; \lambda,\mu)  = 1 \quad(\forall \beta \in \Z_{\ge 0}^n).
\end{align}
The relation (\ref{osr}) is a consequence of the property
\begin{align}\label{oys}
\frac{\g_\gamma(\lambda)\g_{\alpha+\beta-\gamma}(\mu)}
{\g_\alpha(\mu)\g_{\beta}(\lambda)}
\Phi_q(\beta|\alpha+\beta-\gamma;\lambda, \mu)
=q^{\varphi(\alpha-\gamma, \beta-\gamma)}
\Phi_q(\gamma|\alpha;\lambda, \mu).
\end{align}

\subsection{\mathversion{bold}Markov transfer matrix and discrete time 
$U_q(A^{(1)}_n)$-ZRP}\label{ss:sae}
Let $L$ be a positive integer.
Introduce the operator
\begin{align}\label{ngm}
T(\lambda|\mu_1,\ldots, \mu_L) = 
\mathrm{Tr}_{W}\left(
\mathscr{S}_{0,L}(\lambda,\mu_L)\cdots \mathscr{S}_{0,1}(\lambda,\mu_1)
\right)
\in \mathrm{End}(W^{\otimes L}).
\end{align}
In the terminology of the quantum inverse scattering method,
it is the row transfer matrix of the $U_q(A^{(1)}_n)$ vertex model 
of length $L$ with periodic boundary condition 
whose quantum space is  
$W^{\otimes L}$ with inhomogeneity parameters 
$\mu_1, \ldots, \mu_L$ and the auxiliary space $W$  
with spectral parameter $\lambda$.
If these spaces are labeled as $W_1\otimes \cdots \otimes W_L$ and $W_0$, 
the stochastic $R$ matrix $\mathscr{S}_{0,i}(\lambda, \mu_i)$  
acts as $\mathscr{S}(\lambda, \mu_i)$ 
on $W_0 \otimes W_i$ and as the identity elsewhere.

Thanks to the properties (\ref{sybe}) and (\ref{sinv}),
the matrix (\ref{ngm}) forms a commuting family (cf. \cite{Bax}):
\begin{align}\label{mri}
[T(\lambda|\mu_1,\ldots, \mu_L), 
T(\lambda'|\mu_1,\ldots, \mu_L)]=0.
\end{align}
We write the vector 
$|\alpha_1\rangle  \otimes \cdots \otimes |\alpha_L\rangle  \in W^{\otimes L}$
representing a state of the system 
as $|\alpha_1,\ldots, \alpha_L\rangle$ and the action of 
$T=T(\lambda|\mu_1,\ldots, \mu_L) $ as
\begin{align*}
T|\beta_1,\ldots, \beta_L\rangle 
= \sum_{\alpha_1,\ldots, \alpha_L \in \Z_{\ge 0}^n}
 T_{\beta_1,\ldots, \beta_L}^{\alpha_1,\ldots, \alpha_L}
|\alpha_1,\ldots, \alpha_L\rangle 
\in W^{\otimes L}.
\end{align*}
Then the matrix element is depicted by the concatenation of (\ref{vertex}) as 
\begin{equation}\label{tdiag}
\begin{picture}(250,50)(10,-25)
\put(-20,0){$T_{\beta_1,\ldots, \beta_L}^{\alpha_1,\ldots, \alpha_L}=
{\displaystyle \sum_{\gamma_1,\ldots, \gamma_L \in \Z_{\ge 0}^n}}$}

\put(100,0){
\put(0,0){\vector(1,0){24}}
\put(12,-12){\vector(0,1){24}}
\put(-12,-2){$\gamma_L$}\put(28,-2){$\gamma_1$}
\put(9,-22){$\beta_1$}\put(8,16){$\alpha_1$}}

\put(140,0){
\put(0,0){\vector(1,0){24}}
\put(12,-12){\vector(0,1){24}}
\put(27,-2){$\gamma_2$}
\put(9,-22){$\beta_2$}\put(8,16){$\alpha_2$}}

\put(182,-3){$\cdots$}

\put(220,0){
\put(0,0){\vector(1,0){24}}
\put(12,-12){\vector(0,1){24}}
\put(-24,-2){$\gamma_{L-1}$}\put(27,-2){$\gamma_L$.}
\put(9,-22){$\beta_L$}\put(8,16){$\alpha_L$}}
\end{picture}
\end{equation}
By the construction it satisfies the weight conservation:
\begin{align}\label{wc}
T_{\beta_1,\ldots, \beta_L}^{\alpha_1,\ldots, \alpha_L} = 0
\;\;\text{unless}\;\;
\alpha_1+\cdots +\alpha_L = 
\beta_1+\cdots + \beta_L \in \Z_{\ge 0}^{n}.
\end{align}

Let $t$ be a time variable and consider the evolution equation 
\begin{align}\label{dmt}
|P(t+1)\rangle = T(\lambda|\mu_1,\ldots, \mu_L)
|P(t)\rangle \in W^{\otimes L}.
\end{align}
Although this is an equation in an infinite-dimensional vector space,
the property (\ref{wc}) lets it split into 
finite-dimensional subspaces which we call {\em sectors}.
In terms of the array $m=(m_1,\ldots, m_n) \in \Z^n_{\ge 0}$ 
and the set 
\begin{align*}
S(m) = \{(\sigma_1,\ldots, \sigma_L) \in (\Z^n_{\ge 0})^L\mid 
\sigma_1+\cdots + \sigma_L = m\},
\end{align*}
the corresponding sector, which will also be referred to as $m$, is given by 
$\oplus_{(\sigma_1,\ldots, \sigma_L)\in S(m)}
\C |\sigma_1,\ldots, \sigma_L\rangle$.
We interpret a vector 
$|\sigma_1,\ldots, \sigma_L\rangle \in W^{\otimes L}$ 
with $\sigma_i=(\sigma_{i,1},\ldots, \sigma_{i,n}) \in \Z_{\ge 0}^n$ as 
a state of the system in which the $i$ th site from the left 
is populated with $\sigma_{i,a}$ particles of the $a$ th species.  
Thus $m=(m_1,\ldots, m_n)$ is the multiplicity meaning that 
there are $m_a$ particles of species $a$ in total in the corresponding sector.

In order to interpret (\ref{dmt}) as the master 
equation of a discrete time Markov process, the matrix 
$T=T(\lambda|\mu_1,\ldots, \mu_L) $ should fulfill the following conditions:

\vspace{0.2cm}
\begin{enumerate}
\item  Non-negativity; all the elements (\ref{tdiag}) belong to $\R_{\ge 0}$,

\item Sum-to-unity property; $\sum_{\alpha_1,\ldots, \alpha_L\in \Z_{\ge 0}^{n}}
T_{\beta_1,\ldots, \beta_L}^{\alpha_1,\ldots, \alpha_L} = 1$ for any
$(\beta_1,\ldots, \beta_L) \in (\Z_{\ge 0}^n)^L$.
\end{enumerate}

\vspace{0.2cm}\noindent
The property  (i)  holds if 
$\Phi_q(\gamma|\beta; \lambda,\mu_i)\ge 0$
for all $i \in \Z_L$.
This is achieved 
by taking $0 < \mu^{\epsilon}_i < \lambda ^{\epsilon} < 1, q^{\epsilon}<1$
in the either alternative $\epsilon=\pm 1$.
The property (ii) means the total probability conservation and can be 
shown by using (\ref{syk}) as in \cite[Sec.3.2]{KMMO}.

Henceforth we call the $T(\lambda|\mu_1,\ldots, \mu_L) $
{\em Markov transfer matrix} assuming 
$0 < \mu^{\epsilon}_i < \lambda ^{\epsilon} < 1, q^{\epsilon}<1$ always. 
The choice of $\epsilon=\pm 1$ may be viewed as specifying one 
of the two physical regimes of the system.
The equation (\ref{dmt}) 
represents a stochastic dynamics of $n$-species of particles
hopping to the right periodically via an extra lane (horizontal arrows in (\ref{tdiag}))
which particles get on or get off when they leave or arrive at a site.
The rate of these local processes is specified by 
(\ref{ask2}), (\ref{mho}) and (\ref{vertex}).
For $n=1$ and the homogeneous choice $\mu_1=\cdots= \mu_L$, it reduces to 
the model introduced in \cite{P}.

\begin{example}\label{ex:nmi}
Consider $L=2, n=2$ and the sector $m=(2,1)$, which is the six dimensional space.
We denote the basis in terms of multiset of particles as 
$|12,1\rangle,  |\emptyset,112\rangle$, etc, instead of 
the corresponding multiplicity arrays $|(1,1), (1,0)\rangle, |(0,0),(2,1)\rangle$, etc.
Then $T(\lambda| \mu_1,\mu_2)|1,12\rangle$
is equal to
\begin{align*}
&-\frac{q \mu_1 \mu_2 (\lambda-1)^2 (\lambda-\mu_2) |2,11\rangle}
{(\mu_1-1) (\mu_2-1) \lambda^3 (q \mu_2-1)}
+\frac{\mu_1 (\lambda-1) (\lambda-\mu_2) (\lambda-q \mu_2) |\emptyset,112\rangle}
{(\mu_1-1) (\mu_2-1) \lambda^3 (q \mu_2-1)}\\
&+\frac{\mu_2 (\lambda-1)
   (\lambda-\mu_1) (\lambda-\mu_2) |11,2\rangle}
{(\mu_1-1) (\mu_2-1) \lambda^3 (q \mu_2-1)}
-\frac{\mu_2^2 (\lambda-1) (q \lambda-1) (\lambda-\mu_1) |112,\emptyset\rangle}
{(\mu_1-1) (\mu_2-1) \lambda^3 (q \mu_2-1)}\\
&+\frac{\mu_2 (\lambda-1) \left(q \mu_1 \mu_2
   \lambda^2-q \mu_1 \lambda-q \mu_2 \lambda-q \mu_1 \mu_2 \lambda+q \mu_1 \mu_2+q \lambda^2-\mu_1 \mu_2 \lambda+\mu_1 \mu_2\right) |12,1\rangle}
{(\mu_1-1) (\mu_2-1) \lambda^3 (q \mu_2-1)}\\
&-\frac{(\lambda-\mu_2) \left(-q \mu_2 \lambda+q \mu_1 \mu_2+\mu_1 \mu_2
   \lambda^2-\mu_1 \lambda
-2 \mu_1 \mu_2 \lambda+\mu_1 \mu_2+\lambda^2\right) |1,12\rangle}
{(\mu_1-1) (\mu_2-1) \lambda^3 (q \mu_2-1)}.
\end{align*}
\end{example}

\subsection{\mathversion{bold}Continuous 
time $U_q(A^{(1)}_n)$-ZRP}\label{ss:sra}
From the homogeneous case $\mu_1=\cdots= \mu_L=\mu$ 
of the Markov transfer matrix
$T(\lambda | \mu) = T(\lambda|\mu,\ldots, \mu)$, 
we extract the two ``Hamiltonians" 
by the so called Baxter formula (cf. \cite[Chap. 10.14]{Bax}):
\begin{align}\label{bax}
H^{(1)} = \left.-\epsilon \mu^{-1}
\frac{\partial \log 
T(\lambda | \mu)}{\partial \lambda}\right|_{\lambda=1},
\qquad
H^{(2)} = \left. \epsilon \mu\,
\frac{\partial \log T(\lambda | \mu)}{\partial \lambda} \right|_{\lambda = \mu}.
\end{align}
From (\ref{mri}) the commutativity $[H^{(1)}, H^{(2)}]=0$ follows.
Moreover they both satisfy 
\renewcommand{\labelenumi}{(\roman{enumi})'}

\vspace{0.2cm}
\begin{enumerate}
\item Non-negativity; all the off-diagonal elements are nonnegative,

\item Sum-to-zero property; the sum of elements in any column is zero.
\end{enumerate}
\renewcommand{\labelenumi}{(\roman{enumi})}

\vspace{0.2cm}\noindent
Thus an infinitesimal version of (\ref{dmt})  of the form
\begin{align}\label{ctmp2}
\frac{d}{dt}|P(t)\rangle =H |P(t)\rangle,\qquad
H = a H^{(1)}+ bH^{(2)} \quad (a,b \in \R_{\ge 0})
\end{align}
defines a continuous time integrable Markov process for any $a,b \in \R_{\ge 0}$.
The Markov matrices consist of the pairwise interaction terms as
$H^{(r)} = \sum_{i \in \Z_L} h^{(r)}_{i,i+1}$, where
$h^{(r)}_{i,i+1}$ acts on the $(i,i+1)$th components from the left in $|P(t)\rangle$
as $h^{(r)} \in \mathrm{End}(W\otimes W)$ and as the identity elsewhere.
The local Markov matrices are given by
\begin{align}
h^{(1)} |\alpha, \beta\rangle 
&= -\epsilon\mu^{-1}\sum_{\gamma \in \Z_{\ge 0}^n}
\Phi'_q(\gamma|\alpha; 1,\mu)|\alpha-\gamma, \beta+\gamma\rangle,\label{hee}\\
h^{(2)} |\alpha, \beta\rangle 
&= \epsilon \mu \sum_{\gamma \in \Z_{\ge 0}^n}
\Phi'_q(\beta-\gamma|\beta; \mu,\mu)|\alpha+\gamma, \beta-\gamma\rangle,
\label{hee2}
\end{align}
where $'$ denotes $\frac{\partial}{\partial \lambda}$ 
and the local transition rate is explicitly given by 
\begin{equation}\label{rate}
\begin{split}
-\epsilon\mu^{-1}\Phi'_q(\gamma|\alpha;1, \mu)
&=\epsilon 
\frac{q^{\varphi(\alpha-\gamma,\gamma)}
\mu^{|\gamma|-1}(q)_{|\gamma|-1}}
{(\mu q^{|\alpha|-|\gamma|})_{|\gamma|}}
\prod_{i=1}^n
\binom{\alpha_i}{\gamma_i}_{\!q} \quad (|\gamma|\ge 1),\\
&= -\epsilon\sum_{i=0}^{|\beta|-1}\frac{q^i}{1-\mu q^i} \quad(|\gamma|=0),
\end{split}
\end{equation}
\begin{equation}\label{rate2}
\begin{split}
\epsilon \mu \Phi'_q(\beta-\gamma|\beta; \mu,\mu)
&= \epsilon 
\frac{q^{\varphi(\gamma, \beta-\gamma)}
(q)_{|\gamma|-1}}
{(\mu q^{|\beta|-|\gamma|})_{|\gamma|}}
\prod_{i=1}^n
\binom{\beta_i}{\gamma_i}_{\!q} \quad (|\gamma|\ge 1),\\
&= -\epsilon\sum_{i=0}^{|\beta|-1}\frac{1}{1-\mu q^i} \quad(|\gamma|=0).
\end{split}
\end{equation}

From (\ref{hee}) and (\ref{hee2}), $H^{(1)}$ and $H^{(2)}$ individually  
defines an $n$-species totally asymmetric zero range processes ($n$-TAZRP) 
in which particles hop to the  
to right and to the left neighbor sites 
with the rate (\ref{rate}) and (\ref{rate2}), respectively. 
In the both cases $\gamma=(\gamma_1,\ldots, \gamma_n)$ 
gives the number of particles of species $1,\ldots, n$ 
jumping out the departure site. 
The opposite directional move originates 
in the different behavior of $T(\lambda|\mu)$ 
at the two ``Hamiltonian points" $\lambda = 1$ and $\lambda=\mu$.
The Markov matrix $H$ (\ref{ctmp2}) is a mixture of them yielding an 
$n$-species asymmetric zero range process.
In \cite{KMMO}, Bethe eigenvalues of $T(\lambda|\mu_1,\ldots, \mu_L)$ 
and $H$ have been obtained.

The above integrable Markov processes cover several models considered earlier.
When $(\epsilon,\mu)=(1,0)$ in $H^{(1)}$, 
the nontrivial local transitions in (\ref{rate}) 
are limited to the case $|\gamma| =1$.
So if $\gamma_a=1$ and the other components of $\gamma$ are 0,
the rate (\ref{rate}) becomes  
$q^{\alpha_1+\cdots + \alpha_{a-1}}\frac{1-q^{\alpha_a}}{1-q}$.
This reproduces the $n$-species $q$-boson process 
in \cite{T} whose $n=1$ case further goes back to \cite{SW}.
When $n=1$, the system (\ref{dmt}) was also 
studied in \cite{P,CP,BP}.
The transition rate (\ref{rate}) for $n=1$ and general $\mu$ reproduces the one in 
\cite[p2]{T0} by a suitable adjustment.
When $(\epsilon,\mu,q)=(1,0,0)$ in $H^{(2)}$,
a kinematic constraint 
$\varphi(\gamma, \beta-\gamma) = 
\sum_{1 \le i<j \le n}\gamma_i(\beta_j-\gamma_j)=0$ arises from (\ref{rate2}).
In fact, in order that $\gamma_a>0$ happens,
one must have 
$\gamma_{a+1}=\beta_{a+1}, 
\gamma_{a+2}=\beta_{a+2}, \ldots, \gamma_n = \beta_n$.
It means that larger species particles have the priority to jump out,
which precisely reproduces the $n$-species TAZRP  
explored in \cite{KMO1, KMO2} after reversing the labeling of the 
species $1,2, \ldots, n$ of the particles.

\begin{remark}\label{re:noi}
Denote the Markov matrix $H$ in (\ref{ctmp2}) by 
$H(a,b,\epsilon,q,\mu)$ exhibiting the dependence on the parameters.
Then a duality relation  
$H(a,b,-\epsilon, q^{-1}, \mu^{-1})
= \mathscr{P}H(\mu b, \mu a,\epsilon,q,\mu)\mathscr{P}^{-1}$ holds,
where 
$\mathscr{P} = \mathscr{P}^{-1} 
\in \mathrm{End}(W^{\otimes L})$ is the ``parity" operator 
reversing the sites as
$\mathscr{P}|\sigma_1,\ldots, \sigma_L\rangle = 
|\sigma_L, \ldots, \sigma_1\rangle$
\cite[Remark 9]{KMMO}.
Under the duality, the condition 
$0 < \mu^{\epsilon}, q^{\epsilon}<1$ 
on the parameters is preserved.
\end{remark}

\begin{example}
With the same convention as Example \ref{ex:nmi} we have 
\begin{align*}
H^{(1)}|1,12\rangle
&= -\frac{(2+q-3 q \mu) |1,12\rangle}{(1-\mu) (1-q \mu)}
+\frac{q |12,1\rangle}{1-q \mu}
+\frac{|11,2\rangle}{1-q\mu}
+\frac{(1-q) \mu |112,\emptyset\rangle}{(1-\mu) (1-q \mu)}
+\frac{|\emptyset,112\rangle}{1-\mu},\\
H^{(2)}|1,12\rangle
&= -\frac{(3-\mu-2 q \mu) |1,12\rangle}{(1-\mu) (1-q \mu)}
+\frac{|12,1\rangle}{1-q \mu}
+\frac{q |11,2\rangle}{1-q\mu}
+\frac{(1-q) |112,\emptyset\rangle}{(1-\mu) (1-q \mu)}
+\frac{|\emptyset,112\rangle}{1-\mu}.
\end{align*}
\end{example}

\section{Steady states}\label{sec:hnka}

\subsection{General remarks and examples}

By definition a steady state of the discrete time 
$U_q(A^{(1)}_n)$-ZRP (\ref{dmt})
is a vector $|\overline{P}\rangle \in W^{\otimes L}$ 
such that
\begin{align}\label{yum3}
|\overline{P}\rangle= T(\lambda|\mu_1,\ldots, \mu_L)|\overline{P}\rangle.
\end{align}
The steady state is unique within each sector $m$.
Apart from $m$, 
it depends on $q$ and the inhomogeneity parameters $\mu_1, \ldots, \mu_L$ but
{\em not} on $\lambda$ thanks to the commutativity (\ref{mri}).
Sectors $m=(m_1,\ldots, m_n)$ such that $\forall m_a \ge 1$ are called {\em basic}.
Non-basic sectors are equivalent to a basic sector of some $n'<n$ models
with a suitable relabeling of the species.
Henceforth we concentrate on the basic sectors. 
The coefficient appearing in the expansion
\begin{align*}
|\overline{P}(m)\rangle = \sum_{(\sigma_1,\ldots, \sigma_L) \in S(m)}
{\mathbb P}(\sigma_1,\ldots, \sigma_L)
|\sigma_1,\ldots, \sigma_L\rangle
\end{align*}
is the steady state probability if it is properly normalized as
$\sum_{(\sigma_1,\ldots, \sigma_L) \in S(m)}
{\mathbb P}(\sigma_1,\ldots, \sigma_L) = 1$.
In this paper unnormalized ones will also be refereed to as
steady state probabilities by abuse of terminology.

If the dependence on the inhomogeneity parameters are exhibited as 
${\mathbb P}(\sigma_1,\ldots, \sigma_L; \mu_1, \ldots, \mu_L)$,
we have the cyclic symmetry 
${\mathbb P}(\sigma_1,\ldots, \sigma_L; \mu_1, \ldots, \mu_L)
={\mathbb P}(\sigma_L,\sigma_1, \ldots, \sigma_{L-1}; 
\mu_L, \mu_1, \ldots, \mu_{L-1})$ by the definition.

\begin{example}\label{ex:kan}
For $L=2, n=2$ and the sector $m=(1,1)$, we have
\begin{equation}\label{szk}
\begin{split}
|\overline{P}(1,1)\rangle &= 
\mu _1^2 \left(1-\mu _2\right)\left(1-q\mu _2\right)
 \left(\mu _1+\mu _2-2 \mu _2 \mu _1\right) 
 |\emptyset,12\rangle\\
&+\mu _1 \mu _2 \left(1-\mu _1\right) \left(1-\mu _2\right) 
\left(\mu _1+ q\mu_2-\mu _1\mu _2 -q\mu _1 \mu _2\right) |1,2\rangle
+ \mathrm{cyclic}.
\end{split}
\end{equation}
For $L=3, n=2$ and the sector $m=(1,1)$, we have
\begin{align*}
|\overline{P}(1,1)\rangle &= 
\mu_1^2 \mu _2^2 \left(1-\mu _3\right)\left(1-q\mu _3\right) 
\left(\mu _1 \mu _2+\mu _1 \mu _3+\mu _2 \mu _3-3 \mu _1 \mu _3 \mu _2\right) 
|\emptyset,\emptyset,12)\\
&+ \mu _1^2 \mu _2 \mu_3
\left(1-\mu _2\right) \left(1-\mu _3\right) 
\left(q\mu _1 \mu _2+\mu _1 \mu _3+ \mu _2 \mu _3
-2\mu _1 \mu _2 \mu _3-q \mu _1 \mu _2 \mu _3\right)
|\emptyset,2,1\rangle\\
&+\mu _1^2 \mu _2 \mu_3
\left(1-\mu _2\right) \left(1-\mu _3\right) 
\left(\mu _1 \mu _2+q\mu _1 \mu _3+ q\mu _2 \mu _3
-\mu _1 \mu _2 \mu _3-2q \mu _1 \mu _2 \mu _3\right)
|\emptyset,1,2\rangle+ \mathrm{cyclic}.
\end{align*}
Here $\mathrm{cyclic}$ means the sum of terms obtained by 
the replacement $\mu_j\rightarrow \mu_{j+i}$ and 
$|\sigma_1,\ldots, \sigma_L\rangle \rightarrow 
|\sigma_{1+i},\ldots, \sigma_{L+i}\rangle$ over  
$i \in \Z_L$ with $i \neq 0$.
\end{example}

Let us proceed to the steady states of the continuous time 
$U_q(A^{(1)}_n)$-ZRP in Section \ref{ss:sra}.
By the construction (\ref{bax}) and the commutativity (\ref{mri}), 
the Markov matrices $H^{(1)}, H^{(2)}$ and $H$ share the common 
steady state in each sector.
It is given by specializing the discrete time result 
${\mathbb P}(\sigma_1,\ldots, \sigma_L; \mu_1, \ldots, \mu_L)$ 
to the homogeneous case
$\mu_1=\cdots = \mu_L = \mu$.
For instance, the latter $|\overline{P}(1,1)\rangle$ in Example \ref{ex:kan}
reproduces $|\bar{P}_3\rangle$ in 
\cite[Ex.13]{KMMO} via the specialization $\mu_1=\mu_2 = \mu_3 = \mu$.

\begin{example}\label{ex:lin}
For $L=2, n=2$ and the sector $m=(2,1)$, we have
\begin{align*}
|\overline{P}(2,1)\rangle &= 
(1-q^2\mu)(3+q-\mu-3q\mu)|\emptyset, 112\rangle
+(1-\mu)(1+q+2q^2-2q\mu-q^2\mu-q^3\mu)|2,11\rangle\\
&+(1+q)(1-\mu)(2+q+q^2-\mu-q\mu-2q^2\mu)|1,12\rangle+ \mathrm{cyclic}.
\end{align*}
For $L=3, n=2$ and the sector $m=(2,1)$, we have
\begin{align*}
|\overline{P}(2,1)\rangle &= 
3(1-q\mu)(1-q^2\mu)(2+q-(1+2q)\mu)|\emptyset,\emptyset,112\rangle\\
&+(1-\mu)(1-q\mu)(3+3q+3q^2-(1+5q+2q^2+q^3)\mu)|\emptyset,2,11\rangle\\
&+(1+q)(1-\mu)(1-q\mu)(3+3q+3q^2-(2+2q+5q^2)\mu)|\emptyset,1,12\rangle\\
&+(1+q)(1-\mu)(1-q\mu)(5+2q+2q^2-(3+3q+3q^2)\mu)|\emptyset,12,1\rangle\\
&+(1-\mu)(1-q\mu)(1+2q+5q^2+q^3-(3q+3q^2+3q^3)\mu)|\emptyset,11,2\rangle\\
&+(1+q)(1+q+q^2)(1-\mu)^2(2+q-(1+2q)\mu)|1,1,2\rangle + \mathrm{cyclic}.
\end{align*}
The cyclic here means the sum of terms obtained by 
the replacement 
$|\sigma_1,\ldots, \sigma_L\rangle \rightarrow
|\sigma_{1+i},\ldots, \sigma_{L+i}\rangle$ over  
$i \in \Z_L$ with $i \neq 0$.
\end{example}

Examples \ref{ex:kan} and \ref{ex:lin} indicate that
${\mathbb P}(\sigma_1,\ldots, \sigma_L; \mu_1, \ldots, \mu_L) \in
\Z_{\ge 0}[-\mu_1,\ldots, -\mu_L,q]$ holds in an appropriate normalization. 

\subsection{Matrix product construction}

Let us consider the discrete time $U_q(A^{(1)}_n)$-ZRP in Section \ref{ss:sae}
whose master equation is (\ref{dmt}).
We seek the steady state probability in the matrix product form 
\begin{align}\label{mst}
{\mathbb P}(\sigma_1,\ldots, \sigma_L)
= \mathrm{Tr}(X_{\sigma_1}(\mu_1)\cdots X_{\sigma_L}(\mu_L))
\end{align}
in terms of some operator 
$X_\alpha(\mu)$ with $\alpha \in \Z^n_{\ge 0}$.
Our strategy is to invoke the following result.

\begin{proposition}\label{pr:srn}
Suppose the operators 
$X_\alpha(\mu) \; (\alpha \in \Z^n_{\ge 0})$ obey the relation 
\begin{align}\label{msk}
X_\alpha(\mu)X_\beta(\lambda) = 
\sum_{\gamma,\delta \in \Z^n_{\ge 0}}
\mathscr{S}(\lambda, \mu)^{\beta, \alpha}_{\gamma,\delta}
X_\gamma(\lambda)X_\delta(\mu).
\end{align}
Suppose further that $X_0(\lambda)$ is invertible
and the following auxiliary condition is satisfied:
\begin{align}\label{koi}
X_{\beta}(\mu)X_0(\lambda)^{-1}X_\gamma(\lambda) = 
q^{\varphi(\beta,\gamma)}
\frac{\g_\beta(\mu) \g_\gamma(\lambda)}
{\g_{\beta+\gamma}(\mu)}
X_{\beta+\gamma}(\mu).
\end{align}
Then (\ref{mst}) gives the steady state probability of the system 
(\ref{dmt}) if 
the trace is convergent and not identically zero.
\end{proposition}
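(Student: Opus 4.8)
The plan is to verify the fixed-point equation (\ref{yum3}) componentwise, that is, to establish
\[
\sum_{\beta_1,\dots,\beta_L} T^{\alpha_1,\dots,\alpha_L}_{\beta_1,\dots,\beta_L}\,
\mathrm{Tr}\big(X_{\beta_1}(\mu_1)\cdots X_{\beta_L}(\mu_L)\big)
= \mathrm{Tr}\big(X_{\alpha_1}(\mu_1)\cdots X_{\alpha_L}(\mu_L)\big)
\]
for every $(\alpha_1,\dots,\alpha_L)$. Reading the matrix element off the diagram (\ref{tdiag}) as $T^{\alpha_1,\dots,\alpha_L}_{\beta_1,\dots,\beta_L}=\sum_{\gamma_1,\dots,\gamma_L}\prod_{i=1}^L \mathscr{S}(\lambda,\mu_i)^{\gamma_i,\alpha_i}_{\gamma_{i-1},\beta_i}$ with the auxiliary line closed periodically ($\gamma_0\equiv\gamma_L$), I would first absorb the sum over each $\beta_i$ into an operator-valued auxiliary matrix
\[
\Lambda_i{}^{\gamma_i}_{\gamma_{i-1}} := \sum_{\beta_i}\mathscr{S}(\lambda,\mu_i)^{\gamma_i,\alpha_i}_{\gamma_{i-1},\beta_i}\,X_{\beta_i}(\mu_i),
\]
so that the left-hand side becomes a double trace $\mathrm{Tr}_{\mathrm{Fock}}\,\mathrm{Tr}_{\mathrm{aux}}\big(\Lambda_1\cdots\Lambda_L\big)$ over the Fock space carrying the $q$-bosons and the auxiliary space $W$ carrying the indices $\gamma_i$. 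Note that the vertex weight conservation fixes $\beta_i=\gamma_{i-1}-\gamma_i+\alpha_i$, so each $\Lambda_i$ is a single operator depending on $\alpha_i,\gamma_{i-1},\gamma_i$.

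The engine of the argument is to read the ZF relation (\ref{msk}) as an intertwining property of these $\Lambda_i$. A direct matching of indices converts (\ref{msk}) into
\[
\sum_{\gamma_{i-1}} X_{\gamma_{i-1}}(\lambda)\,\Lambda_i{}^{\gamma_i}_{\gamma_{i-1}}
= X_{\alpha_i}(\mu_i)\,X_{\gamma_i}(\lambda),
\]
which says that an auxiliary factor $X(\lambda)$ fed in on the left of a column re-emerges on its right with the index advanced from $\gamma_{i-1}$ to $\gamma_i$, leaving behind the ``new'' occupation operator $X_{\alpha_i}(\mu_i)$. Iterating this across the chain transports the auxiliary factor through all $L$ columns,
\[
\sum_{\gamma_0,\dots,\gamma_{L-1}} X_{\gamma_0}(\lambda)\,\Lambda_1{}^{\gamma_1}_{\gamma_0}\cdots\Lambda_L{}^{\gamma_L}_{\gamma_{L-1}}
= X_{\alpha_1}(\mu_1)\cdots X_{\alpha_L}(\mu_L)\,X_{\gamma_L}(\lambda),
\]
and this one identity already captures all of the bulk structure of the problem.

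The remaining, and decisive, point is periodicity. The trace $\mathrm{Tr}_{\mathrm{aux}}$ forces $\gamma_L=\gamma_0$ summed over all values, whereas the transport identity carries a spurious prefactor $X_{\gamma_0}(\lambda)$ and an unclosed $X_{\gamma_L}(\lambda)$ whose incoming index is summed while the outgoing one is free. Closing the auxiliary loop is precisely what (\ref{msk}) alone cannot do; this is where I expect the real difficulty to lie, and it is the reason the auxiliary condition (\ref{koi}) is indispensable beyond the ZF algebra. My plan here is to exploit that the coefficient in (\ref{koi}) is engineered to coincide with the stochastic-$R$-matrix identity (\ref{gkn}), together with the invertibility of $X_0(\lambda)$: the condition should let me manufacture a pair $X_0(\lambda)^{-1}X_{\gamma_0}(\lambda)$ at the seam without altering the operator, so that after the loop is threaded the spurious factors collapse into a conjugation by $X_0(\lambda)$. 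Cyclicity of $\mathrm{Tr}_{\mathrm{Fock}}$ then removes that conjugation, and the double trace reduces to $\mathrm{Tr}_{\mathrm{Fock}}\big(X_{\alpha_1}(\mu_1)\cdots X_{\alpha_L}(\mu_L)\big)$, the desired right-hand side.

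Two safeguards complete the scheme. First, every rearrangement and every use of cyclicity above is valid only under the stated hypothesis that the trace converges, so I would invoke convergence explicitly at each such step, while non-vanishing guarantees that the resulting ${\mathbb P}$ is a genuine unnormalized stationary weight rather than the trivial zero. Second, as a consistency check I would treat the degenerate case $L=1$ separately: there no transport occurs and stationarity collapses directly to the sum rule (\ref{syk}) for $\Phi_q$, confirming that the sum-to-unity property is reproduced and isolating the genuinely multi-site role played by (\ref{koi}).
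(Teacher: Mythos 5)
Your proposal is correct and is essentially the paper's own proof in transfer-matrix clothing: your operator-valued matrices $\Lambda_i$ are the paper's monodromy elements $M$, your transport identity is the paper's repeated application of (\ref{msk}) after inserting $X_0(\lambda)X_0(\lambda)^{-1}$ at the seam (i.e.\ your identity specialized to $\gamma_L=0$), and the loop is closed exactly by the mechanism you name --- cyclicity of the Fock trace brings $X_{\gamma_0}(\lambda)$ next to $X_0(\lambda)^{-1}$, the auxiliary condition (\ref{koi}) fuses $X_{\beta_L}(\mu_L)X_0(\lambda)^{-1}X_{\gamma_0}(\lambda)$ into $X_{\beta_L+\gamma_0}(\mu_L)$, and (\ref{gkn}) absorbs the resulting coefficient into the last vertex, re-indexing the horizontal line from $0$ to $\gamma_0$ and $\beta_L\mapsto\beta_L+\gamma_0$ (the support property $\mathscr{S}(\lambda,\mu)^{\gamma,\delta}_{\alpha,\beta}=0$ unless $\gamma\le\beta$ makes the shifted re-summation exact), which is a fusion at the last site rather than the ``conjugation by $X_0(\lambda)$'' you describe. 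The only genuine slip is your $L=1$ consistency check: even there the auxiliary condition (\ref{koi}) is indispensable, since the sum rule (\ref{syk}) is the column-sum (sum-to-unity) property and only yields the constant \emph{left} eigenvector, not stationarity of the trace.
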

\begin{proof}
In view of (\ref{yum3}) we are to show
\begin{align}\label{mdsy}
\mathrm{Tr}(X_{\alpha_1}(\mu_1)\cdots X_{\alpha_L}(\mu_L))
= \sum_{\beta_1,\ldots, \beta_L \in \Z^n_{\ge 0}}
T(\lambda|\mu_1,\ldots, \mu_L)^{\alpha_1,\ldots, \alpha_L}_{\beta_1,\ldots, \beta_L}
\mathrm{Tr}(X_{\beta_1}(\mu_1)\cdots X_{\beta_L}(\mu_L)).
\end{align}
Introduce the elements 
$M_{\gamma; \beta_1,\ldots, \beta_L}^{\alpha_1,\ldots, \alpha_L; \delta}
=M(\lambda|\mu_1,\ldots, \mu_L)_{\gamma; 
\beta_1,\ldots, \beta_L}^{\alpha_1,\ldots, \alpha_L; \delta}$ 
of the monodromy matrix by the diagram similar to (\ref{tdiag}) as
\begin{equation*}
\begin{picture}(250,50)(10,-25)
\put(-30,0){$
M_{\gamma; \beta_1,\ldots, \beta_L}^{\alpha_1,\ldots, \alpha_L; \delta}=
{\displaystyle \sum_{\gamma_1,\ldots, \gamma_{L-1} \in \Z_{\ge 0}^n}}$}

\put(100,0){
\put(0,0){\vector(1,0){24}}
\put(12,-12){\vector(0,1){24}}
\put(-9,-2){$\gamma$}\put(30,-2){$\gamma_1$}
\put(9,-22){$\beta_1$}\put(8,16){$\alpha_1$}}

\put(140,0){
\put(0,0){\vector(1,0){24}}
\put(12,-12){\vector(0,1){24}}
\put(27,-2){$\gamma_2$}
\put(9,-22){$\beta_2$}\put(8,16){$\alpha_2$}}

\put(182,-3){$\cdots$}

\put(220,0){
\put(0,0){\vector(1,0){24}}
\put(12,-12){\vector(0,1){24}}
\put(-24,-2){$\gamma_{L-1}$}\put(27,-3){$\delta$,}
\put(9,-22){$\beta_L$}\put(8,16){$\alpha_L$}}
\end{picture}
\end{equation*}
where the $i$ th vertex from the left denotes the element 
(\ref{vertex}) of $\mathscr{S}(\lambda, \mu_i)$.
By the definition we have
$M(\lambda|\mu_1,\ldots, \mu_L)_{\gamma; 
\beta_1,\ldots, \beta_L}^{\alpha_1,\ldots, \alpha_L; \delta}=0$
unless $\delta+\sum_{i=1}^L\alpha_i=\gamma+\sum_{i=1}^L\beta_i$.
Elements of 
the Markov transfer matrix is given by  
$T(\lambda|\mu_1,\ldots, \mu_L)^{\alpha_1,\ldots, \alpha_L}_{\beta_1,\ldots, \beta_L}
= \sum_{\gamma \in \Z^n_{\ge 0}}
M(\lambda|\mu_1,\ldots, \mu_L)_{\gamma; 
\beta_1,\ldots, \beta_L}^{\alpha_1,\ldots, \alpha_L; \gamma}$, where the sum is 
bounded by $\gamma \le \alpha_1, \beta_L$.
See the remarks after (\ref{mho}).
Now we have
\begin{align*}
\text{LHS of (\ref{mdsy})} &=\mathrm{Tr}(X_{\alpha_1}(\mu_1)\cdots X_{\alpha_L}(\mu_L)X_0(\lambda)X_0(\lambda)^{-1})\\
&\overset{(\ref{msk})}{=} \sum_{\gamma, \beta_1, \ldots, \beta_L}
M_{\gamma; 
\beta_1,\ldots, \beta_L}^{\alpha_1,\ldots, \alpha_L; 0}\;
\mathrm{Tr}(X_{\beta_1}(\mu_1)\cdots 
X_{\beta_{L-1}}(\mu_{L-1})X_{\beta_L}(\mu_L)X_0(\lambda)^{-1}
X_\gamma(\lambda))\\
&\overset{(\ref{koi})}{=} 
\sum_{\gamma, \beta_1, \ldots, \beta_L}
M_{\gamma; 
\beta_1,\ldots, \beta_L}^{\alpha_1,\ldots, \alpha_L; 0}\;
\mathrm{Tr}(X_{\beta_1}(\mu_1)\cdots 
X_{\beta_{L-1}}(\mu_{L-1})X_{\beta_L+\gamma}(\mu_L))
q^{\varphi(\beta_L,\gamma)}
\frac{\g_{\beta_L}(\mu_L) \g_\gamma(\lambda)}
{\g_{\beta_L+\gamma}(\mu_L)}\\
&\overset{(\ref{gkn})}{=} 
\sum_{\gamma, \beta_1, \ldots, \beta_L}
M_{\gamma; 
\beta_1,\ldots, \beta_{L-1}, \beta_L + \gamma}^{\alpha_1,\ldots, \alpha_L; \gamma}\;
\mathrm{Tr}(X_{\beta_1}(\mu_1)\cdots 
X_{\beta_{L-1}}(\mu_{L-1})X_{\beta_L+\gamma}(\mu_L)).
\end{align*}
By replacing $\beta_L+\gamma$ with $\beta_L$ and summing over $\gamma$,
the coefficient of the trace in the last expression becomes 
$T(\lambda|\mu_1,\ldots, \mu_L)^{\alpha_1,\ldots, \alpha_L}_{\beta_1,\ldots, \beta_L}$.
\end{proof}
Observe a perfect fit of the auxiliary condition (\ref{koi}) and 
the property of the stochastic $R$ matrix (\ref{gkn}).

As explained before Example \ref{ex:lin},
a matrix product formula 
for the continuous time $U_q(A^{(1)}_n)$-ZRP in Section \ref{ss:sra} 
follows from (\ref{mst}) just by the specialization $\mu_1=\cdots = \mu_L = \mu$.

The relation of the form (\ref{msk}) 
possessing a solution to the Yang-Baxter equation
as the structure function is often called the
{\em Zamolodchikov-Faddeev} (ZF) {\em algebra}.
In our case 
its associativity is assured by the 
transposed Yang-Baxter equation (\ref{tybe})
rather than (\ref{sybe}).
Proposition \ref{pr:srn} implies that 
the RHS of (\ref{mst}) satisfies the 
Knizhnik-Zamolodchikov type equation (cf. \cite{GDW}) 
as a function of $\mu_1, \ldots, \mu_L$.

From (\ref{ask2}) the ZF algebra (\ref{msk}) reads more explicitly as
\begin{align*}
X_\alpha(\mu)X_\beta(\lambda) = 
\sum_{\gamma \le \alpha}\Phi_q(\beta|\alpha+\beta-\gamma;\lambda, \mu)
X_{\gamma}(\lambda)X_{\alpha+\beta-\gamma}(\mu).
\end{align*}
We find it convenient to work with $Z_\alpha(\mu)$ defined by 
\begin{align}\label{aim}
X_\alpha(\mu) = \g_\alpha(\mu)Z_\alpha(\mu),
\end{align}
where $\g_\alpha(\mu)$ was defined in 
(\ref{mgd}).
Then by using the identity (\ref{oys}), the ZF algebra 
and the auxiliary condition are cast into
\begin{align}
&Z_\alpha(\mu)Z_\beta(\lambda) = 
\sum_{\gamma \le \alpha}
q^{\varphi(\alpha-\gamma, \beta-\gamma)}
\Phi_q(\gamma|\alpha; \lambda,\mu)
Z_\gamma(\lambda)Z_{\alpha+\beta-\gamma}(\mu),\label{mrn}\\
&Z_{\beta}(\mu)Z_0(\lambda)^{-1}Z_\gamma(\lambda) = 
q^{\varphi(\beta,\gamma)}Z_{\beta+\gamma}(\mu). 
\label{aaa}
\end{align}
The expression (\ref{aaa}) 
or equivalently 
$Y_{\beta}(\mu)Y_\gamma(\lambda) = 
q^{\varphi(\beta,\gamma)}Y_{\beta+\gamma}(\mu)$
in terms of the operator 
$Y_\alpha(\mu):= Z_0(0)^{-1}Z_\alpha(\mu)$
is the simplest presentation of the auxiliary condition.

\begin{proposition}\label{pr:irt}
The ZF algebra (\ref{msk}) (or (\ref{mrn})) and the auxiliary condition 
(\ref{koi}) (or (\ref{aaa})) admit a ``trivial representation" 
in terms of an operator $K_\alpha$ 
satisfying 
$K_0 = 1$ and $K_\alpha K_\beta = q^{\varphi(\alpha, \beta)}K_{\alpha+\beta}$ as
\begin{align*}
X_\alpha(\mu) \mapsto \g_\alpha(\mu)K_\alpha,\quad
Z_\alpha(\mu) \mapsto K_\alpha.
\end{align*}
\end{proposition}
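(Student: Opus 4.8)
The plan is to exploit the fact that the equivalence between the $X$-form relations (\ref{msk}), (\ref{koi}) and the $Z$-form relations (\ref{mrn}), (\ref{aaa}) has already been established through (\ref{aim}) and the identity (\ref{oys}). Hence it suffices to check that the assignment $Z_\alpha(\mu)\mapsto K_\alpha$ satisfies (\ref{mrn}) and (\ref{aaa}); the statement for $X_\alpha(\mu)\mapsto \g_\alpha(\mu)K_\alpha$ then follows automatically. I would first record that the relations $K_0=1$ and $K_\alpha K_\beta=q^{\varphi(\alpha,\beta)}K_{\alpha+\beta}$ are associative, since the bilinearity of $\varphi$ gives $\varphi(\alpha,\beta)+\varphi(\alpha+\beta,\gamma)=\varphi(\alpha,\beta+\gamma)+\varphi(\beta,\gamma)$, so that such an operator $K_\alpha$ genuinely exists (the relations define a nonzero algebra acting on itself). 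The auxiliary condition (\ref{aaa}) is then immediate: because $K_0=1$, its image reads $K_\beta K_\gamma=q^{\varphi(\beta,\gamma)}K_{\beta+\gamma}$, which is exactly the defining relation.

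The substantive point is (\ref{mrn}). Substituting $Z_\alpha(\mu)\mapsto K_\alpha$ and collapsing every product of two $K$'s by the defining relation, both sides become scalar multiples of $K_{\alpha+\beta}$: the left-hand side gives $q^{\varphi(\alpha,\beta)}K_{\alpha+\beta}$, while the $\gamma$-th term on the right-hand side produces $q^{\varphi(\alpha-\gamma,\beta-\gamma)+\varphi(\gamma,\alpha+\beta-\gamma)}\Phi_q(\gamma|\alpha;\lambda,\mu)K_{\alpha+\beta}$. Expanding the exponent by bilinearity of $\varphi$ collapses it to $\varphi(\alpha,\beta)+\varphi(\gamma,\alpha)-\varphi(\alpha,\gamma)$, so after cancelling the common factor $q^{\varphi(\alpha,\beta)}$ the whole identity reduces to the scalar sum rule
\begin{align*}
\sum_{\gamma\le\alpha}q^{\varphi(\gamma,\alpha)-\varphi(\alpha,\gamma)}\Phi_q(\gamma|\alpha;\lambda,\mu)=1.
\end{align*}

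The main obstacle, and the one genuinely clever step, is to recognise this twisted sum rule as an instance of the untwisted one (\ref{syk}). Here I would use the reversal $\alpha'=(\alpha_n,\ldots,\alpha_1)$ introduced after (\ref{osr}) together with the elementary identity $\varphi(\xi',\eta')=\varphi(\eta,\xi)$, which follows directly from $\varphi(\alpha,\beta)=\sum_{i<j}\alpha_i\beta_j$ by reindexing. The remaining factors of $\Phi_q$ in (\ref{mho})—those depending only on $|\gamma|$, $|\alpha|$ and the $q$-binomials—are invariant under simultaneous reversal of $\gamma$ and $\alpha$, while the prefactor $q^{\varphi(\alpha-\gamma,\gamma)}$ turns into $q^{\varphi(\gamma,\alpha-\gamma)}$; the twist $q^{\varphi(\gamma,\alpha)-\varphi(\alpha,\gamma)}$ is precisely what is needed to supply this change, so that $q^{\varphi(\gamma,\alpha)-\varphi(\alpha,\gamma)}\Phi_q(\gamma|\alpha;\lambda,\mu)=\Phi_q(\gamma'|\alpha';\lambda,\mu)$. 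As $\gamma$ runs over $0\le\gamma\le\alpha$, the reversed array $\gamma'$ runs over $0\le\gamma'\le\alpha'$, so the left-hand side above equals $\sum_{\gamma'\le\alpha'}\Phi_q(\gamma'|\alpha';\lambda,\mu)$, which is $1$ by (\ref{syk}). This completes the verification of (\ref{mrn}) and hence of the proposition. I expect the exponent bookkeeping to be routine; the only part requiring insight is spotting that the reversal symmetry of $\varphi$ converts the spurious twist into the species-reversed sum rule.
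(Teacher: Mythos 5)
Your proof is correct and takes essentially the same route as the paper's: you reduce (\ref{mrn}) under $Z_\alpha(\mu)\mapsto K_\alpha$ to the twisted sum rule $\sum_{\gamma\le\alpha}q^{\varphi(\gamma,\alpha)-\varphi(\alpha,\gamma)}\Phi_q(\gamma|\alpha;\lambda,\mu)=1$ and then use the reversal identity $\varphi(\xi',\eta')=\varphi(\eta,\xi)$ to recognize it as the species-reversed instance of the untwisted sum rule (\ref{syk}), exactly as in the paper. Your additional remarks on the associativity/existence of $K_\alpha$ and the immediate verification of (\ref{aaa}) correspond to the paper's ``easily checked'' step and add nothing contradictory.
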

\begin{proof}
The auxiliary condition is easily checked. 
The relation (\ref{mrn}) is rewritten as
\begin{align*}
1= \sum_{\gamma \le \alpha}
q^{\varphi(\alpha-\gamma, \beta-\gamma)
-\varphi(\alpha, \beta)+\varphi(\gamma,\alpha+\beta-\gamma) }
\Phi_q(\gamma|\alpha;\lambda, \mu).
\end{align*}
Let $\alpha'=(\alpha_n,\ldots, \alpha_1)$ be the reverse ordered array of 
$\alpha$ as defined after (\ref{mgd}).
Then the obvious identity 
$\varphi(\alpha, \beta) = \varphi(\beta',\alpha')$ leads to 
$\Phi_q(\gamma|\alpha;\lambda, \mu)
= q^{\varphi(\alpha, \gamma)-\varphi(\gamma, \alpha)}
\Phi_q(\gamma'|\alpha';\lambda, \mu)$, hence
the RHS of the above relation is
$\sum_{\gamma \le \alpha}\Phi_q(\gamma'|\alpha';\lambda, \mu)$.
This equals 1 thanks to (\ref{syk}).
\end{proof}

When $n=1$,  $\varphi(\alpha,\beta)=0$ holds 
by (\ref{mho}).
Therefore we may set $Z_\alpha(\mu)=K_\alpha=1$. 
Then (\ref{mst}) gives the result
\begin{align}\label{msk2}
{\mathbb P}(\sigma_1,\ldots, \sigma_L)
= \prod_{i=1}^L \g_{\sigma_i}(\mu_i)
= \prod_{i=1}^L \frac{\mu_i^{-\sigma_i}(\mu_i)_{\sigma_i}}{(q)_{\sigma_i}},
\end{align}
where the $i$ th site variable is a single integer $\sigma_i \in \Z_{\ge 0}$.
In the homogeneous case $\mu_1=\cdots = \mu_L=\mu$,
one can remove the common overall factor within a given sector.
It leads to 
${\mathbb P}(\sigma_1,\ldots, \sigma_L) 
= \prod_{i=1}^L\frac{(\mu)_{\sigma_i}}{(q)_{\sigma_i}}$
up to an overall normalization.
This reproduces the product measure in \cite[eqs. (3), (7)]{P}. 
See also \cite{EMZ}.

It is easy to construct $K_\alpha$ obeying  
$K_\alpha K_\beta = q^{\varphi(\alpha, \beta)}K_{\alpha+\beta}$ 
for general $n$ by using $q$-commuting operators.
However, doing so naively runs into the trouble  
$\mathrm{Tr}(K_{\alpha_1}\cdots K_{\alpha_L})=0$.
The content of the next section grew out of 
an effort to overcome it for $n=2$. 
See also the remark after (\ref{air}).

\section{$U_q(A^{(1)}_2)$ case}\label{sec:rna}
The operator $Z_\alpha(\mu)$ in (\ref{aim})  captures the
multispecies effect beyond the product measure (\ref{msk2}) for $n=1$.
From now on we concentrate on the next nontrivial case $n=2$.
We consider the regime $\epsilon=+1, 0< \forall \mu_i, \mu, q <1$
without losing generality thanks to Remark \ref{re:noi}.

\subsection{\mathversion{bold}$q$-boson realization}
Consider the Fock space 
$F = \bigoplus_{m \ge 0}\C |m\rangle$, its dual
$F^\ast = \bigoplus_{m \ge 0}\C \langle m |$ and the 
operators $\bp, \bm, \bk$ acting on them as
\begin{equation}\label{yrk}
\begin{split}
\bp | m \rangle &= |m+1\rangle,\qquad \bm | m \rangle = (1-q^m)|m-1\rangle,
\qquad \bk |m\rangle = q^m |m \rangle,\\
\langle m | \bm &= \langle m+1 |,\qquad
\langle m | \bp = \langle m-1|(1-q^m),\qquad
\langle m | \bk = \langle m | q^m,
\end{split}
\end{equation}
where $|-1\rangle = \langle -1 |=0$.
They satisfy
\begin{align}\label{akn}
\bk {\bf b}_\pm = q^{\pm 1} {\bf b}_\pm \bk,\qquad
\bp \bm = 1 - \bk,\qquad \bm \bp = 1-q\bk.
\end{align}
We specify the bilinear pairing of $F^\ast$ and $F$ as 
$\langle m | m'\rangle = \theta(m=m')(q)_m$.
Then $\langle m| (X|m'\rangle) = (\langle m|X)|m'\rangle$ holds and the 
trace is given by $\mathrm{Tr}(X) = \sum_{m \ge 0}
\frac{\langle m|X|m\rangle}{(q)_m}$.

Let $\mathcal{B}$ denote the $q$-boson algebra 
generated by $1, \bpm, \bk$ obeying the relations (\ref{akn}).
As a vector space, it has the direct sum decomposition
$\mathcal{B} = \C 1 \oplus \mathcal{B}_{\text{fin}}$,
where $\mathcal{B}_{\text{fin}} = 
\oplus_{r \ge 1}  (\mathcal{B}_+^r \oplus  \mathcal{B}_-^r  
\oplus \mathcal{B}_0^r)$ with
$\mathcal{B}^r_\pm =\oplus_{s\ge 0} \C \bk^s\bpm^r$ and  
$\mathcal{B}^r_0 =\C \bk^r$.
The trace $\mathrm{Tr}(X)$ is convergent if 
$X \in \mathcal{B}_{\text{fin}}$.
It vanishes unless $X \in \oplus_{r \ge 1}\mathcal{B}^r_0$ when it is 
evaluated by $\mathrm{Tr}(\bk^r) = (1-q^r)^{-1}$.
It is an easy exercise to verify the following generalization:
\begin{align}\label{utk}
\mathrm{Tr}(\bk^{m_2}\bm^{m_1}\bp^{m_1}) 
= \frac{(q)_{m_1}(q)_{m_2-1}}{(q)_{m_1+m_2}}\qquad (m_1\ge 0, \;m_2\ge 1).
\end{align}
The trace is invariant under the replacement 
$\bpm \mapsto c^{\pm 1} \bpm$ for any nonzero constant $c$
since it is an automorphism $\mathcal{B}$.
The $q$-boson algebra $\mathcal{B}$ here is slightly different from 
those in \cite[Sec.3.1]{KMO0} and \cite[eq.(3.2)]{KMO2}.

The following result is the main source of our matrix product formula.
\begin{theorem}\label{th:nzm}
For $\alpha = (\alpha_1, \alpha_2) \in \Z_{\ge 0}^2$, the operator
\begin{align}\label{syr}
X_\alpha(\mu) = \g_\alpha(\mu)Z_\alpha(\mu),\quad
Z_\alpha(\mu)
= \frac{(\bp)_\infty}{(\mu^{-1}\bp)_\infty}\bk^{\alpha_2}\bm^{\alpha_1}
\end{align}
satisfies the ZF algebra (\ref{msk}) and the auxiliary condition (\ref{koi}).
\end{theorem}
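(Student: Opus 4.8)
The plan is to use the factorization $Z_\alpha(\mu) = A(\mu)K_\alpha$, where $A(\mu) = (\bp)_\infty/(\mu^{-1}\bp)_\infty$ is a power series in $\bp$ alone and $K_\alpha = \bk^{\alpha_2}\bm^{\alpha_1}$. Three elementary facts set the stage. From (\ref{akn}), pushing $\bm^{\alpha_1}$ past $\bk^{\beta_2}$ via $\bm\bk = q\bk\bm$ gives $K_\alpha K_\beta = q^{\varphi(\alpha,\beta)}K_{\alpha+\beta}$; the $q$-binomial theorem gives $A(\mu) = \sum_{k\ge 0}\frac{\mu^{-k}(\mu)_k}{(q)_k}\bp^k$, so $A(\mu)$ is invertible with constant term $1$; and since $A(\mu)$ and $A(\lambda)$ are both functions of $\bp$, they commute. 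Because $X_\alpha(\mu) = \g_\alpha(\mu)Z_\alpha(\mu)$ as in (\ref{aim}), it suffices to prove the reduced relations (\ref{mrn}) and (\ref{aaa}) for the $Z_\alpha(\mu)$ of (\ref{syr}).

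First I would dispose of the auxiliary condition (\ref{aaa}), which is immediate: since $Z_0(\lambda) = A(\lambda)$, the dressings cancel in $Z_\beta(\mu)Z_0(\lambda)^{-1}Z_\gamma(\lambda) = A(\mu)K_\beta A(\lambda)^{-1}A(\lambda)K_\gamma = A(\mu)K_\beta K_\gamma$, and the $K$-relation turns this into $q^{\varphi(\beta,\gamma)}A(\mu)K_{\beta+\gamma} = q^{\varphi(\beta,\gamma)}Z_{\beta+\gamma}(\mu)$. This is exactly (\ref{aaa}), hence (\ref{koi}).

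The substance is the ZF algebra, for which I would verify the equivalent form (\ref{mrn}), whose left side is $A(\mu)\bk^{\alpha_2}\bm^{\alpha_1}A(\lambda)\bk^{\beta_2}\bm^{\beta_1}$. The engine is the single commutation $\bm\,\bp^k = \bp^k\bm + (1-q^k)\bp^{k-1}\bk$, a direct consequence of (\ref{akn}) (for $k=1$ it reads $\bm\bp-\bp\bm = (1-q)\bk$). Applied to the series $A(\lambda)$ it yields, after factoring $A(\lambda)$ to the left, the clean device $\bm A(\lambda) = A(\lambda)B$ with $B = \bm + \frac{1-\lambda}{\lambda}(1-\bp)^{-1}\bk$, and then by associativity $\bm^{\alpha_1}A(\lambda) = A(\lambda)B^{\alpha_1}$. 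Expanding $B^{\alpha_1}$ groups terms by the number $\gamma_1$ of absorbed $\bm$'s: this index is the species-$1$ summation variable, the $q$-commuting of the $\bm$'s and $\bk$'s produces the $q$-binomial $\binom{\alpha_1}{\gamma_1}_q$, and the accumulated factors $(1-q^j\bp)^{-1}$ merge into the dressing and shift its $\lambda$-Pochhammers. The surviving $\bk^{\alpha_2}$, commuted through the dressings by $\bk\,\bp = q\bp\,\bk$ (rescaling $\bp\mapsto q^{\alpha_2}\bp$), generates the species-$2$ index $\gamma_2$, the $q$-binomial $\binom{\alpha_2}{\gamma_2}_q$, and the power $q^{(\alpha_1-\gamma_1)(\beta_2-\gamma_2)} = q^{\varphi(\alpha-\gamma,\beta-\gamma)}$ appearing in (\ref{mrn}).

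The hard part will be the resummation that remains once these moves are carried out. I expect to verify it most safely at the level of Fock-space matrix elements, using $\bm^r\bp^k|m\rangle = \frac{(q)_{m+k}}{(q)_{m+k-r}}|m+k-r\rangle$ and $\bk|m\rangle = q^m|m\rangle$, which turns (\ref{mrn}) into a scalar $q$-series identity. Collecting the two $q$-binomials together with the factor $(\mu/\lambda)^{|\gamma|}(\lambda)_{|\gamma|}(\mu/\lambda)_{|\alpha|-|\gamma|}/(\mu)_{|\alpha|}$ should reproduce precisely $\Phi_q(\gamma|\alpha;\lambda,\mu)$ of (\ref{mho}). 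The obstacle is genuinely this summation: the double sum over $\gamma=(\gamma_1,\gamma_2)$ couples through $|\gamma|$ in the Pochhammer factors and therefore does not split into two independent one-dimensional sums, so one must recognise the resulting terminating $q$-hypergeometric sum as an instance of the $q$-Vandermonde ($q$-binomial) summation. Once the coefficient of every basis vector matches, (\ref{mrn}) and hence (\ref{msk}) follow, completing the proof together with the auxiliary condition already established.
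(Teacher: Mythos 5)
Your opening moves are sound and in fact coincide with how the paper begins: the auxiliary condition (\ref{aaa}) does follow by the cancellation $Z_\beta(\mu)Z_0(\lambda)^{-1}Z_\gamma(\lambda)=A(\mu)K_\beta K_\gamma$ together with $K_\beta K_\gamma = q^{\varphi(\beta,\gamma)}K_{\beta+\gamma}$, and your device $\bm A(\lambda)=A(\lambda)\bigl(\bm+\tfrac{1-\lambda}{\lambda}(1-\bp)^{-1}\bk\bigr)$ is exactly the commutation relation $\bigl[\bm,\tfrac{(\eta\bp)_\infty}{(\zeta\bp)_\infty}\bigr]=(\zeta-\eta)\tfrac{(q\eta\bp)_\infty}{(\zeta\bp)_\infty}\bk$ that the paper uses to strip the infinite products and land on its polynomial identity (\ref{cie}). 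The Fock-space strategy is also legitimate in principle, since the Fock representation is faithful for generic $q$, so verifying all matrix elements would suffice.

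The genuine gap is that everything after this reduction --- which is the actual content of the theorem --- is deferred to ``recognise the resulting terminating $q$-hypergeometric sum as an instance of the $q$-Vandermonde summation,'' and there is no reason to believe this, nor any computation supporting it. Two concrete problems. First, $B^{\alpha_1}$ does not admit a clean $q$-binomial expansion: although $\bm\bk=q\bk\bm$, the coefficient $(1-\bp)^{-1}$ of $\bk$ in $B$ fails to commute with $\bm$ (one has $\bm(1-\bp)^{-1}=(1-\bp)^{-1}\bm+(1-\bp)^{-1}\bk-q(1-q\bp)^{-1}\bk$), so your claim that the resolvents ``merge into the dressing'' producing $\binom{\alpha_1}{\gamma_1}_{\!q}$ is precisely what has to be proven; in the paper the analogous structure does not disappear but survives as Pochhammers $(q^{1-\alpha_1}W)_{\alpha_1}$ of the noncommutative binomial $W=q^{-\alpha_2}\bm+\lambda^{-1}\bk$ in (\ref{cie}). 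Second, the scalar identity you would obtain from matrix elements is a coupled multiple sum (over $\gamma_1,\gamma_2$ and an internal mode index) tied together by $(\lambda)_{|\gamma|}(\nu)_{|\alpha|-|\gamma|}$, and the paper's proof of the equivalent operator identity is an induction on $\alpha_1$ whose \emph{base case alone} (Lemma \ref{le:hrk2}, the case $\alpha_1=0$) already requires more than $q$-Chu--Vandermonde: after checking $\bp=0$ (which is indeed a Vandermonde-type convolution), one must evaluate at $\bp=\lambda q^{-k}$ and prove the vanishing $f_{a,k,s}(\nu)=0$ for $k<a$ via a generating-function argument through (\ref{air}). The general case then needs a second lemma (Lemma \ref{le:hrk}, reorganizing $(\bp)_m\bm^m$ into Pochhammers of $Y=\bm+\bk$) and a self-similarity step identifying the correction terms $\mathscr{R}_s$ with the rescaled expression $\mathscr{R}(q^{\pm s}\bpm,\alpha_1-s,\alpha_2;q^s\lambda,q^s\mu)$ so that the induction hypothesis applies. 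Until you either carry out the matrix-element summation in full or supply an induction of this type, the ZF relation (\ref{mrn}) --- hence (\ref{msk}) --- remains unproven in your proposal.
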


The proof will be presented in Section \ref{ss:yna}.
The ratio of the infinite products are defined in terms of the series expansion:
\begin{align}\label{air}
\frac{(zw)_\infty}{(z)_\infty}
= \sum_{j \ge 0}\frac{(w)_j}{(q)_j}z^j.
\end{align}

In (\ref{syr}), the factor $K_\alpha = \bk^{\alpha_2}\bm^{\alpha_1}$ 
realizes the trivial representation in Proposition \ref{pr:irt}.
However, taking it only as 
$Z_\alpha(\mu) = \bk^{\alpha_2}\bm^{\alpha_1}$ 
leads to the vanishing trace 
$\mathrm{Tr}(Z_{\sigma_1}(\mu_1)\cdots Z_{\sigma_L}(\mu_L))=0$.
In this sense the representation (\ref{syr}) of the ZF algebra is a
perturbation series from the trivial representation with respect to $\bp$
such that the trace acquires nonzero contribution.

\subsection{Steady state probability}
Let $(\sigma_1,\ldots, \sigma_L) \in S(m)$ be a configuration 
in a basic sector $m=(m_1, m_2) \in \Z_{\ge 1}^2$, where each local state
is the two component array 
$\sigma_i = (\sigma_{i,1},\sigma_{i,2}) \in \Z^2_{\ge 0}$.
Then the formula (\ref{mst}) becomes
\begin{equation}\label{mzs}
\begin{split}
{\mathbb P}(\sigma_1,\ldots, \sigma_L) 
&= \left(\,\prod_{i=1}^L\g_{\sigma_i}(\mu_i) \right)
\mathrm{Tr}(Z_{\sigma_1}(\mu_1)\cdots Z_{\sigma_L}(\mu_L))\\
&=\left(\,\prod_{i=1}^L\frac{\mu_i^{-|\sigma_i|}(\mu_i)_{|\sigma_i|}}
{(q)_{\sigma_{i,1}}(q)_{\sigma_{i,2}}} \right)
\mathrm{Tr}\left(
\frac{(\bp)_\infty}{(\mu_1^{-1}\bp)_\infty}\bk^{\sigma_{1,2}}\bm^{\sigma_{1,1}}
\cdots 
\frac{(\bp)_\infty}{(\mu_L^{-1}\bp)_\infty}\bk^{\sigma_{L,2}}\bm^{\sigma_{L,1}}
\right).
\end{split}
\end{equation}
The element in the trace belongs to (a completion of) $\mathcal{B}_{\text{fin}}$
thanks to $\sigma_{1,2}+ \cdots + \sigma_{L,2} = m_2 \ge 1$.
Thus the trace is convergent and 
(\ref{mzs}) provides a matrix product formula of the
steady state probability. 

\begin{example}\label{ex:aoi}
Consider $L=2$ and the sector $m=(1,1)$.
We calculate (\ref{mzs}) as
\begin{align*}
{\mathbb P}(\emptyset, 12) 
&= \g_{0,0}(\mu_1)\g_{1,1}(\mu_2)
\mathrm{Tr}(Z_{0,0}(\mu_1)Z_{1,1}(\mu_2))= 
\frac{\mu_2^{-2}(\mu_2)_2}{(q)_1^2}\mathrm{Tr}\left(
\frac{(\bp)_\infty}{(\mu_1^{-1}\bp)_\infty}
\frac{(\bp)_\infty}{(\mu_2^{-1}\bp)_\infty}\bk\bm\right)\\
&= \frac{\mu_2^{-2}(\mu_2)_2}{(q)_1^2}\left(\frac{\mu^{-1}_1(\mu_1)_1}{(q)_1}+
\frac{\mu^{-1}_2(\mu_2)_1}{(q)_1}\right)\mathrm{Tr}(\bp\bk\bm),\\
{\mathbb P}(1,2) 
&= \g_{1,0}(\mu_1)\g_{0,1}(\mu_2)
\mathrm{Tr}(Z_{1,0}(\mu_1)Z_{0,1}(\mu_2))
= \frac{(\mu_1\mu_2)^{-1}(\mu_1)_1(\mu_2)_1}{(q)^2_1}
\mathrm{Tr}\left(
\frac{(\bp)_\infty}{(\mu_1^{-1}\bp)_\infty}\bm
\frac{(\bp)_\infty}{(\mu_2^{-1}\bp)_\infty}\bk\right)\\
&= \frac{(\mu_1\mu_2)^{-1}(\mu_1)_1(\mu_2)_1}{(q)^2_1}
\left(\frac{\mu^{-1}_1(\mu_1)_1}{(q)_1}\mathrm{Tr}(\bp\bm\bk)
+ \frac{\mu^{-1}_2(\mu_2)_1}{(q)_1}\mathrm{Tr}(\bm\bp\bk)\right)
\end{align*}
by means of (\ref{air}).
In view of 
$\mathrm{Tr}(\bp\bk\bm) = q^{-1}\mathrm{Tr}(\bp\bm\bk)
= \mathrm{Tr}(\bm\bp\bk) = (1-q^2)^{-1}$, 
these results coincide with (\ref{szk}) if they are commonly multiplied by 
$(\mu_1\mu_2)^3(q)_2(1-q)^2$ which is symmetric in $\mu_1$ and $\mu_2$.
\end{example}

One way to generally characterize 
our normalization of ${\mathbb P}(\sigma_1,\ldots, \sigma_L)$
implied by the formula (\ref{mzs}) is the following calculation
extending the first case of Example \ref{ex:aoi}.

\begin{example}\label{ex:skb}
For general system size $L$ and a general basic sector 
$m=(m_1, m_2) \in \Z^2_{\ge 1}$, 
consider the most condensed configuration 
in which all the particles are contained in a particular site $i$.
Namely $\sigma_j=m$ if $j=i$ and $\emptyset$ otherwise.
Then we have
\begin{align*}
{\mathbb P}(\emptyset,\ldots,\overset{i}{m},\ldots, \emptyset)
&= \frac{\mu_i^{-m_1-m_2}(\mu_i)_{m_1+m_2}}{(q)_{m_1}(q)_{m_2}}
\mathrm{Tr}\left(\bk^{m_2}\bm^{m_1}
\frac{(\bp)_\infty}{(\mu_1^{-1}\bp)_\infty}\cdots
\frac{(\bp)_\infty}{(\mu_L^{-1}\bp)_\infty}\right)\\
&= 
\frac{\mu_i^{-m_1-m_2}(\mu_i)_{m_1+m_2}}{(q)_{m_1+m_2}(1-q^{m_2})}
\sum_{r_1+\cdots+ r_L=m_1}
\frac{(\mu_1)_{r_1}\cdots (\mu_L)_{r_L}}
{(q)_{r_1}\cdots (q)_{r_L}}
\mu_1^{-r_1}\cdots \mu_L^{-r_L},
\end{align*}
where (\ref{utk}) has been used and 
the sum extends over $r_1,\ldots, r_L \in \Z_{\ge 0}$
under the specified condition.
\end{example}

We note that (\ref{mzs}) is convergent also at
$m_1=\sigma_{1,1}+\cdots + \sigma_{L,1}=0$ which is 
outside the basic sector, and reproduces the $n=1$ result 
(\ref{msk}) up to an overall normalization as long as $m_2 \ge 1$.

So far we have treated the inhomogeneous discrete time 
$U_q(A^{(1)}_2)$-ZRP in 
Section \ref{ss:sae}.
A matrix product formula for  
continuous time $U_q(A^{(1)}_2)$-ZRP in Section \ref{ss:sra} 
is obtained from (\ref{mzs}) by the specialization $\mu_1=\cdots = \mu_L = \mu$.
Making the replacement $\bpm \rightarrow \mu^{\pm 1} \bpm$
(see the remark after (\ref{utk})) and 
removing a further common factor within a sector, we arrive at
\begin{equation}\label{msm}
\begin{split}
{\mathbb P}(\sigma_1,\ldots, \sigma_L) 
&=\left(\,\prod_{i=1}^L\frac{(\mu)_{\sigma_{i,1}+\sigma_{i,2}}}
{(q)_{\sigma_{i,1}}(q)_{\sigma_{i,2}}} \right)
\mathrm{Tr}\left(
\frac{(\mu \bp)_\infty}{(\bp)_\infty}\bk^{\sigma_{1,2}}\bm^{\sigma_{1,1}}
\cdots 
\frac{(\mu \bp)_\infty}{(\bp)_\infty}\bk^{\sigma_{L,2}}\bm^{\sigma_{L,1}}
\right).
\end{split}
\end{equation}

\begin{example}\label{ex:naikk}
Consider the homogeneous ZRP $\mu_1=\cdots = \mu_L = \mu$.
Suppose $m=(m_1,m_2)\ge l=(l_1,l_2) \in \Z^2_{\ge 0}$ and 
consider the state 
$(m-l, \emptyset, \ldots,\overset{j}{ l}, \ldots,\emptyset)$
for some $j \in [2,L]$.
It is a less condensed state than Example \ref{ex:skb} 
where $|l|=l_1+l_2$ particles have been separated from the 1 st to the $j$ th site.
One of them is assumed to be the 1 st site without losing generality 
by the $\Z_L$-cyclic symmetry.
Based on computer calculation of (\ref{msm}) we conjecture 
\begin{align}\label{ldma}
\mathbb{P}(m, \emptyset,\ldots, \emptyset)^{-1}
\sum_{|l|=r,\, l \le m}\mathbb{P}(m-l,
\emptyset,\ldots, \overset{j}{l},\ldots, \emptyset)
=\frac{f_{|m|-r}f_r}{f_{|m|}},\qquad
f_s = \frac{(\mu)_s}{(q)_s}
\end{align}
for any site $ j \in [2,L]$ and $0 \le r \le |m|$.
The independence on $L, j$ and $m_1-m_2$ is curious. 
The conjecture has been proven for $r=1$.
See Section \ref{sec:tmn} for more comments. 
\end{example}

At $q=\mu=0$ (and $a=0$ in (\ref{ctmp2})), 
the present model reduces to the $n=2$ 
case of the $n$-TAZRP studied  in \cite{KMO1,KMO2}.
The formula (\ref{msm}) simplifies to 
\begin{align}\label{ntk}
{\mathbb P}(\sigma_1,\ldots, \sigma_L)  = 
\mathrm{Tr}(\tilde{Z}_{\sigma_1} \cdots \tilde{Z}_{\sigma_L}),\qquad
\tilde{Z}_{\sigma_i} = \tilde{Z}_{\sigma_{i,1}, \sigma_{i,2}} = 
\Bigl(\sum_{j \ge 1}\bp^j \Bigr)\bk^{\sigma_{i,2}}\bm^{\sigma_{i,1}}.
\end{align}
This result agrees with \cite{KMO1,KMO2}.
In fact $X_{\alpha_1,\alpha_2}(1)$ in \cite[Example 2.1]{KMO2}
coincides with $\tilde{Z}_{\alpha_2,\alpha_1}$ here.

The matrix product formulas (\ref{mzs}) and (\ref{msm}) 
for the steady state probabilities, 
which are corollaries of Proposition \ref{pr:srn} and Theorem \ref{th:nzm},  
are our main results on the $U_q(A^{(1)}_2)$-ZRP in this paper.

\subsection{Proof of Theorem \ref{th:nzm}}\label{ss:yna}

From $X_0(\mu) = \frac{(\bp)_\infty}{(\mu^{-1}\bp)_\infty}$,
its inverse $X_0(\mu)^{-1} = \frac{(\mu^{-1}\bp)_\infty}{(\bp)_\infty}$ 
certainly exists.
The auxiliary condition (\ref{koi}) is also straightforward to check.
In what follows we shall focus on the proof of the relation (\ref{mrn})
among the $Z_\alpha(\mu)$ specified in (\ref{syr}).
We use a subsidiary variable $\nu= \mu\lambda^{-1}$ throughout.
Substituting (\ref{syr}) into (\ref{mrn}) and using the relations 
\begin{align*}
\bk \frac{(\eta \bp)_\infty}{(\zeta\bp)_\infty} = 
\frac{(q\eta \bp)_\infty}{(q\zeta\bp)_\infty}\bk,\qquad 
\left[ \bm,  \frac{(\eta \bp)_\infty}{(\zeta\bp)_\infty}\right]
= (\zeta-\eta) \frac{(q\eta \bp)_\infty}{(\zeta\bp)_\infty}\bk,
\end{align*}
one can remove the ratio of infinite products.
The result reads
\begin{align}
&(-1)^{\alpha_1}q^{\frac{1}{2}\alpha_1(\alpha_1-1)}
(\lambda^{-1}\bp)_{\alpha_2}(q^{1-\alpha_1}W)_{\alpha_1} \nonumber\\
&\quad= \sum_{\gamma \le \alpha}(-1)^{\gamma_1}
q^{(\gamma_1-\alpha_1)\alpha_2+\frac{1}{2}\gamma_1(\gamma_1-1)}
\nu^{|\gamma|}
\frac{(\lambda)_{|\gamma|}(\nu)_{|\alpha|-|\gamma|}}
{(\mu)_{|\alpha|}}
\binom{\alpha_1}{\gamma_1}_{\!q}\binom{\alpha_2}{\gamma_2}_{\!q} \nonumber\\
&\qquad\qquad \times
(\mu^{-1}\bp)_{\gamma_2}(q^{|\gamma|}\bp)_{|\alpha|-|\gamma|}
(q^{1-\gamma_1}X_{\gamma_2})_{\gamma_1}\bm^{\alpha_1-\gamma_1},
\label{cie}
\end{align}
where 
$W = q^{-\alpha_2}\bm+\lambda^{-1}\bk$ and 
$X_{\gamma_2} = q^{-\gamma_2}\bm + \mu^{-1}\bk$.
Curiously $\mu$ is contained in the RHS only.
In what follows we prove (\ref{cie}) by induction on $\alpha_1$ 
utilizing the following remark. 

\begin{remark}
Suppose a relation $F(\bp, \bm, \bk)=0$ holds in 
the $q$-boson algebra $\mathcal{B}$.
Then $F(c\bp, c^{-1}\bm, \bk)=0$ also holds for any $c \neq 0$ since 
$\bpm \mapsto c^{\pm 1}\bpm$ is an automorphism of $\mathcal{B}$.
\end{remark}

\begin{lemma}\label{le:hrk2}
The relation (\ref{cie}) is valid at $\alpha_1=0$, namely the following holds: 
\begin{align}\label{yum}
(\lambda^{-1}\bp)_{\alpha_2} = 
\sum_{0 \le \gamma_2 \le \alpha_2}
\nu^{\gamma_2}
\frac{(\lambda)_{\gamma_2}(\nu)_{\alpha_2-\gamma_2}}
{(\mu)_{\alpha_2}}\binom{\alpha_2}{\gamma_2}_{\!q} 
(\mu^{-1}\bp)_{\gamma_2}(q^{\gamma_2}\bp)_{\alpha_2-\gamma_2}.
\end{align}
\end{lemma}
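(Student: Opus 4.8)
The plan is to observe first that (\ref{yum}) is, despite its appearance, an identity in the single operator $\bp$: every factor $(\lambda^{-1}\bp)_{\alpha_2}$, $(\mu^{-1}\bp)_{\gamma_2}$ and $(q^{\gamma_2}\bp)_{\alpha_2-\gamma_2}$ is a polynomial in $\bp$ alone, and $\bp$ commutes with itself. Since $\bp^{j}|0\rangle = |j\rangle$, the operators $1,\bp,\bp^{2},\dots$ are linearly independent, so (\ref{yum}) holds in $\mathcal B$ if and only if the corresponding relation between polynomials in a formal scalar variable $x$ holds. Thus I would set $x=\bp$ and prove the scalar identity
\[
(\lambda^{-1}x)_{\alpha_2}
= \frac{1}{(\mu)_{\alpha_2}}\sum_{\gamma_2=0}^{\alpha_2}
\nu^{\gamma_2}(\lambda)_{\gamma_2}(\nu)_{\alpha_2-\gamma_2}
\binom{\alpha_2}{\gamma_2}_{\!q}
(\mu^{-1}x)_{\gamma_2}(q^{\gamma_2}x)_{\alpha_2-\gamma_2},
\qquad \nu=\mu\lambda^{-1},
\]
an identity of polynomials of degree $\le\alpha_2$ in $x$.

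I would establish this by induction on $\alpha_2$, the case $\alpha_2=0$ being trivial and $\alpha_2=1$ serving as a guide. For the inductive step I would exploit that, for every $\gamma_2\le\alpha_2-1$, the factor $(q^{\gamma_2}x)_{\alpha_2-\gamma_2}$ contains the common top factor $(1-q^{\alpha_2-1}x)$, namely $(q^{\gamma_2}x)_{\alpha_2-\gamma_2}=(1-q^{\alpha_2-1}x)(q^{\gamma_2}x)_{\alpha_2-1-\gamma_2}$. Pulling this out of the bulk of the sum and using the $q$-Pascal relation for $\binom{\alpha_2}{\gamma_2}_{\!q}$, together with $(\mu)_{\alpha_2}=(\mu)_{\alpha_2-1}(1-\mu q^{\alpha_2-1})$ and $(\nu)_{\alpha_2-\gamma_2}=(\nu)_{\alpha_2-1-\gamma_2}(1-\nu q^{\alpha_2-1-\gamma_2})$, I would reduce the bulk to $(1-q^{\alpha_2-1}x)$ times the case $\alpha_2-1$. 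The residual boundary term $\gamma_2=\alpha_2$ must then convert this spurious factor $(1-q^{\alpha_2-1}x)$ into the correct $(1-\lambda^{-1}q^{\alpha_2-1}x)$ demanded by the left-hand factorization $(\lambda^{-1}x)_{\alpha_2}=(1-\lambda^{-1}q^{\alpha_2-1}x)(\lambda^{-1}x)_{\alpha_2-1}$.

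I expect this last reconciliation to be the main obstacle: one must show that the $\gamma_2=\alpha_2$ contribution, which carries $(\mu^{-1}x)_{\alpha_2}$, exactly accounts for the discrepancy between $(1-q^{\alpha_2-1}x)$ and $(1-\lambda^{-1}q^{\alpha_2-1}x)$. This is precisely where the constraint $\nu=\mu\lambda^{-1}$ (equivalently $\nu\lambda=\mu$) does its work, making the $\mu$- and $\lambda$-dependences collapse, exactly as in the $\alpha_2=1$ check where $\nu\mu^{-1}-\nu=\lambda^{-1}(1-\mu)$ produced the coefficient $\lambda^{-1}$.

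As an alternative route avoiding the delicate boundary bookkeeping, I would instead compare coefficients of $x^{m}$ on the two sides via the finite $q$-binomial theorem $(zx)_{N}=\sum_{m}\binom{N}{m}_{\!q}(-z)^{m}q^{\binom{m}{2}}x^{m}$. On the left this gives $\binom{\alpha_2}{m}_{\!q}(-\lambda^{-1})^{m}q^{\binom{m}{2}}$ directly, while on the right it produces a double sum over $\gamma_2$ and over the split of $m$ between the two Pochhammer factors; after using $\binom{\alpha_2}{\gamma_2}_{\!q}\binom{\gamma_2}{a}_{\!q}=\binom{\alpha_2}{a}_{\!q}\binom{\alpha_2-a}{\gamma_2-a}_{\!q}$ the inner $\gamma_2$-sum becomes a terminating $q$-Chu--Vandermonde summation whose evaluation (again using $\nu=\mu\lambda^{-1}$) kills the $\mu$-dependence and reproduces the left-hand coefficient. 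Either way, once (\ref{yum}) is secured, the base case $\alpha_1=0$ of (\ref{cie}) is in hand and the remaining cases follow by the induction on $\alpha_1$ announced before the lemma.
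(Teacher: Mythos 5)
Your opening reduction --- treating (\ref{yum}) as an identity of polynomials in the single commuting variable $\bp$ --- is exactly the paper's first move as well. From there, however, the arguments diverge. The paper verifies the degree-$\alpha_2$ polynomial identity by evaluation at $\alpha_2+1$ points: at $\bp=0$ it reduces via (\ref{air}) to the convolution $\frac{(\mu)_{\alpha_2}}{(q)_{\alpha_2}}=\sum_{i+j=\alpha_2}\frac{\nu^{j}(\lambda)_{j}(\nu)_i}{(q)_j(q)_i}$, and at the $\alpha_2$ zeros $\bp=\lambda q^{-k}$ ($0\le k<\alpha_2$) of the left-hand side it shows the right-hand side also vanishes, by expanding $(\lambda q^{j-k})_k$ in powers of $\lambda$ and proving $f_{a,k,s}(\nu)=0$ for $k<a$ through a generating-function degree count. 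Your second route --- comparing coefficients of $\bp^m$ via the finite $q$-binomial theorem --- is a genuinely different execution, and it does work; it is the more routine piece of $q$-analysis, at the price of a triple sum, whereas the paper's evaluation-at-roots argument trades the double summation for the slicker vanishing lemma.

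One correction to the bookkeeping of your coefficient route: the terminating $\gamma_2$-sum does \emph{not} by itself reproduce the left-hand coefficient; two Vandermonde-type evaluations are needed, not one. Writing $\gamma_2=a+k$ for the split $a+b=m$ and using your contraction of $q$-binomials, the inner sum (after factoring $(\nu)_{\alpha_2-a-k}=(\nu)_b(\nu q^b)_{\alpha_2-m-k}$) is
\begin{equation*}
\sum_{k=0}^{\alpha_2-m}\binom{\alpha_2-m}{k}_{\!q}(\lambda q^a)_k\,(\nu q^b)^k\,(\nu q^b)_{\alpha_2-m-k}=(\mu q^{m})_{\alpha_2-m},
\end{equation*}
which uses $\lambda\nu=\mu$ but does not yet finish: an outer terminating sum over the split $a+b=m$ remains, and it is again of $q$-Chu--Vandermonde type,
\begin{equation*}
\sum_{a+b=m}\binom{m}{a}_{\!q}(\lambda)_a(\nu)_b\,\lambda^{-a}=\lambda^{-m}(\mu)_m.
\end{equation*}
Only the product $(\mu q^m)_{\alpha_2-m}(\mu)_m=(\mu)_{\alpha_2}$ of the two evaluations, combined with $\binom{a}{2}+\binom{b}{2}+ab=\binom{m}{2}$ and $\binom{\alpha_2}{a}_{\!q}\binom{\alpha_2-a}{b}_{\!q}=\binom{\alpha_2}{m}_{\!q}\binom{m}{a}_{\!q}$, matches the left-hand coefficient $\binom{\alpha_2}{m}_{\!q}(-\lambda^{-1})^m q^{\binom{m}{2}}$ once the prefactor $1/(\mu)_{\alpha_2}$ is restored. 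Both evaluations follow from the same expansion (\ref{air}) that the paper invokes at $\bp=0$, so this route closes completely. By contrast, your first route (induction on $\alpha_2$) is only a plan: the boundary reconciliation at $\gamma_2=\alpha_2$ that you yourself flag as ``the main obstacle'' is precisely the nontrivial content and is left unproven, so on its own it would not constitute a proof; the lemma is carried by your coefficient-comparison alternative.
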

\begin{proof}
One can prove it as an identity of polynomials of order $\alpha_2$ in $\bp$.
At $\bp = 0$ it is equivalent to
\begin{align*}
\frac{(\mu)_{\alpha_2}}{(q)_{\alpha_2}} 
= \sum_{i+j=\alpha_2}
\frac{\nu^{j}(\lambda)_{j}(\nu)_i}{(q)_j(q)_i}.
\end{align*}
Due to (\ref{air}) this indeed holds since the RHS is the coefficient of $z^{\alpha_2}$ in
$\frac{(\nu z)_\infty(\mu z)_\infty}{(z)_\infty(\nu z)_\infty}
=\frac{(\mu z)_\infty}{(z)_\infty}$.
It suffices to check (\ref{yum}) further at 
$\bp = \lambda q^{-k}$ with $k=0,1,\ldots, \alpha_2-1$.
By the identity
$(\lambda)_{\gamma_2}(\lambda q^{\gamma_2-k})_{\alpha_2-\gamma_2}
=(\lambda q^{-k})_{\alpha_2}(\lambda q^{\gamma_2-k})_k/(\lambda q^{-k})_k$,
the relation to show takes the form
\begin{align*}
0 &= \sum_{0 \le j \le \alpha_2}\nu^{j}
\left(\nu\right)_{\alpha_2-j}
\left(\nu^{-1} q^{-k}\right)_j
\binom{\alpha_2}{j}_{\!q}
(\lambda q^{j-k})_k\quad (0 \le k < \alpha_2).
\end{align*}
By expanding the last factor into powers of $\lambda$, 
the RHS is expressed as a sum 
$\sum_{0 \le s \le k}\lambda^s A_{k,s} f_{\alpha_2, k,s}(\nu)$ with 
\begin{align*}
f_{a,k,s}(\nu) &= 
 \sum_{0 \le j \le a}(q^s\nu)^j (\nu)_{a-j}(\nu^{-1}q^{-k})_j\binom{a}{j}_{\!q}
 \qquad (0 \le s \le k < a),\\
 A_{k,s}&=(-1)^s q^{\frac{1}{2}s(s-1)-ks}\binom{k}{s}_q.
\end{align*}
Consider the identity
\begin{align*}
(q\nu z)_s(q^{-k+s+1}z)_{k-s} = 
\frac{(q^{-k+s+1}z)_\infty(q\nu z)_\infty}{(q^{s+1}\nu z)_\infty (qz)_\infty}
= \sum_{a\ge 0}\frac{(qz)^af_{a,k,s}(\nu)}{(q)_a},
\end{align*}
where the last equality is due to (\ref{air}).
Since the LHS is an order $k$ polynomial in $z$, it follows that
$f_{a,k,s}(\nu)=0$ for $k<a$.
\end{proof}

\begin{lemma}\label{le:hrk}
Set $Y=\bm+\bk$. 
Then the following equality is valid for any $m \in \Z_{\ge 0}$.
\begin{align*}
(\bp)_m\bm^m = (-1)^mq^{\frac{1}{2}m(m-1)}
\sum_{s=0}^m \mu^{m-s}\binom{m}{s}_{\!q}(\mu)_s(\mu^{-1}q^{1-m}Y)_{m-s}.
\end{align*}
\end{lemma}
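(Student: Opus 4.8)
The plan is to show first that the operator product on the left, although built from both $\bp$ and $\bm$, collapses to a polynomial in the single operator $Y=\bm+\bk$, and then to identify that polynomial with the right-hand side by a scalar $q$-series identity. The crucial step is a one-term recursion. Using $(\bp)_m=(\bp)_{m-1}(1-q^{m-1}\bp)$, the relation $\bp\bm=1-\bk$ from (\ref{akn}), and the commutation $\bk\bm^{m-1}=q^{-(m-1)}\bm^{m-1}\bk$, I would compute
\begin{align*}
(\bp)_m\bm^m
&=(\bp)_{m-1}\bm^m-q^{m-1}(\bp)_{m-1}(1-\bk)\bm^{m-1}\\
&=(\bp)_{m-1}\bm^{m-1}(\bm+\bk-q^{m-1})
=(\bp)_{m-1}\bm^{m-1}(Y-q^{m-1}).
\end{align*}
Since all the factors $Y-q^{j}$ commute, iterating this from $(\bp)_0\bm^0=1$ yields the closed form $(\bp)_m\bm^m=\prod_{j=0}^{m-1}(Y-q^{j})$, which is indeed a polynomial in $Y$ alone.

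Next I would put this product into $q$-Pochhammer form. Factoring $Y-q^{j}=-q^{j}(1-q^{-j}Y)$ gives $\prod_{j=0}^{m-1}(Y-q^{j})=(-1)^mq^{\frac{1}{2}m(m-1)}(q^{1-m}Y)_m$, whose prefactor already coincides with the one in the statement. Cancelling it, the lemma becomes equivalent to the identity $(q^{1-m}Y)_m=\sum_{s=0}^m\binom{m}{s}_{\!q}\mu^{m-s}(\mu)_s(\mu^{-1}q^{1-m}Y)_{m-s}$. Because $Y$ is a single operator, all powers occurring here commute, so this is a genuine scalar identity; writing $A=\mu$ and $B=\mu^{-1}q^{1-m}Y$ (so that $AB=q^{1-m}Y$) it reads $(AB)_m=\sum_{s=0}^m\binom{m}{s}_{\!q}A^{m-s}(A)_s(B)_{m-s}$.

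To finish, I would prove this last identity directly from the $q$-binomial theorem (\ref{air}) by the factorization $\frac{(ABz)_\infty}{(z)_\infty}=\frac{(ABz)_\infty}{(Az)_\infty}\cdot\frac{(Az)_\infty}{(z)_\infty}$: expanding the two factors on the right via (\ref{air}) gives $\sum_j\frac{(B)_j}{(q)_j}A^jz^j$ and $\sum_i\frac{(A)_i}{(q)_i}z^i$, and comparing the coefficient of $z^m$ on both sides (the left one being $\frac{(AB)_m}{(q)_m}$) produces exactly the required sum after multiplying by $(q)_m$ and substituting $s=m-j$. The only real subtlety in the whole argument is spotting the recursion of the first paragraph, which trivialises the noncommutative part of the problem; once the left-hand side is known to be $\prod_{j}(Y-q^{j})$, the $q$-boson structure plays no further role and everything reduces to routine $q$-series manipulation. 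I would double-check the bookkeeping at $m=1$, where both sides equal $Y-1$, to fix the signs and powers of $q$.
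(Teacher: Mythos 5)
Your proof is correct and takes essentially the same route as the paper: your recursion $(\bp)_m\bm^m=(\bp)_{m-1}\bm^{m-1}(Y-q^{m-1})$ is exactly the paper's identity $(\bp-q^{-n+1})\bm^n=\bm^{n-1}(1-q^{-n+1}Y)$ applied successively (after factoring out the scalar $(-1)^mq^{\frac{1}{2}m(m-1)}$), and your scalar identity $(AB)_m=\sum_s\binom{m}{s}_{\!q}A^{m-s}(A)_s(B)_{m-s}$ with $A=\mu$, $B=\mu^{-1}q^{1-m}Y$ is established by the same telescoping use of (\ref{air}) that the paper applies to its right-hand side. No gaps.
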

\begin{proof}
It is equivalent to
\begin{align*}
(\bp-1)(\bp-q^{-1})\cdots (\bp-q^{1-m}) \bm^m = 
(q)_m\sum_{s+t=m}\frac{(\mu)_s}{(q)_s}
\frac{(\mu^{-1}q^{1-m}Y)_t }{(q)_t}\mu^t,
\end{align*}
where the sum extends over $s, t \in \Z_{\ge 0}$
under the specified constraint.
Applying the identity
\begin{align*}
(\bp-q^{-n+1})\bm^n = (1-\bk-q^{-n+1}\bm)\bm^{n-1}=\bm^{n-1}(1-q^{-n+1}Y)
\end{align*}
successively, one finds that the LHS is equal to $(q^{1-m}Y)_m$.
On the other hand from (\ref{air}), the RHS is equal to the coefficient of 
$z^m$ in the power series
\begin{align*}
(q)_m \frac{(z\mu)_\infty}{(z)_\infty}
\frac{(zq^{1-m}Y)_\infty}{(z\mu)_\infty}
=(q)_m \frac{(zq^{1-m}Y)_\infty}{(z)_\infty} = 
(q)_m \sum_{j \ge 0}\frac{(q^{1-m}Y)_j}{(q)_j}z^j.
\end{align*}
\end{proof}

\vspace{0.2cm}{\em Proof of (\ref{cie}) for general $\alpha_1$}.
Let us write (\ref{cie}) as 
$\mathscr{L}(\bpm, \alpha_1,\alpha_2; \lambda) = 
\mathscr{R}(\bpm, \alpha_1,\alpha_2; \lambda, \mu)$.
We are going to prove it by induction on $\alpha_1$.
The case $\alpha_1=0$ was shown in Lemma \ref{le:hrk2}.
In Lemma \ref{le:hrk} with $m=\gamma_1$, replace
$\bpm$ by $(\mu^{-1}q^{\gamma_2})^{\pm 1}\bpm$.
Then $(\mu^{-1}q^{1-m}Y)_{m-s}$ becomes 
$(q^{1-\gamma_1}X_{\gamma_2})_{\gamma_1-s}$.
Solving it for the $s=0$ term we get
\begin{align*}
(q^{1-\gamma_1}X_{\gamma_2})_{\gamma_1}
= (-\mu)^{-\gamma_1}q^{-\frac{1}{2}\gamma_1(\gamma_1-1)}
(\mu^{-1}q^{\gamma_2}\bp)_{\gamma_1}(\mu q^{-\gamma_2}\bm)^{\gamma_1}
-\sum_{s=1}^{\gamma_1}\mu^{-s}\binom{\gamma_1}{s}_{\!q}
(\mu)_s(q^{1-\gamma_1}X_{\gamma_2})_{\gamma_1-s}.
\end{align*}
Substituting this into the RHS of (\ref{cie}) we have the decomposition
$\mathscr{R}(\bpm, \alpha_1,\alpha_2; \lambda, \mu) =
\mathscr{R}_0(\bpm, \alpha_1,\alpha_2; \lambda, \mu)-
\sum_{s=1}^{\gamma_1}\mathscr{R}_s(\bpm, \alpha_1,\alpha_2; \lambda, \mu)$,
where
\begin{align}
\mathscr{R}_0(\bpm, \alpha_1,\alpha_2; \lambda, \mu) &=
\sum_{\gamma \le \alpha}q^{(\gamma_1-\alpha_1)\alpha_2-\gamma_1\gamma_2}
\nu^{|\gamma|}
\frac{(\lambda)_{|\gamma|}(\nu)_{|\alpha|-|\gamma|}}
{(\mu)_{|\alpha|}}
\binom{\alpha_1}{\gamma_1}_{\!q}\binom{\alpha_2}{\gamma_2}_{\!q}
(\mu^{-1}\bp)_{|\gamma|}(q^{|\gamma|}\bp)_{|\alpha|-|\gamma|}\bm^{\alpha_1},\nonumber\\
\mathscr{R}_s(\bpm, \alpha_1,\alpha_2; \lambda, \mu) &=
\sum_{\gamma \le \alpha}q^{(\gamma_1-\alpha_1)\alpha_2}
\nu^{|\gamma|}
\frac{(\lambda)_{|\gamma|}(\nu)_{|\alpha|-|\gamma|}}
{(\mu)_{|\alpha|}}
\binom{\alpha_1}{\gamma_1}_{\!q}\binom{\alpha_2}{\gamma_2}_{\!q}
(\mu^{-1}\bp)_{\gamma_2}(q^{|\gamma|}\bp)_{|\alpha|-|\gamma|}\nonumber\\
&\qquad\qquad\qquad\times 
(-1)^{\gamma_1}q^{\frac{1}{2}\gamma_1(\gamma_1-1)}
\mu^{-s}
\binom{\gamma_1}{s}_{\!q}
(\mu)_s
(q^{1-\gamma_1}X_{\gamma_2})_{\gamma_1-s}
\bm^{\alpha_1-\gamma_1}.\nonumber
\end{align}
In $\mathscr{R}_0(\bpm, \alpha_1,\alpha_2; \lambda, \mu)$, replace 
$\gamma_2$ by $\gamma_2-\gamma_1$.
Then the sum over $\gamma_1$ can be taken by means of 
$\binom{|\alpha|}{\gamma_2}_{\!q} = \sum_{0 \le \gamma_1 \le \min(\gamma_2,\alpha_1)}
q^{\gamma_1(\alpha_2-\gamma_2+\gamma_1)}
\binom{\alpha_1}{\gamma_1}_{\!q}\binom{\alpha_2}{\gamma_2-\gamma_1}_{\!q}$,
yielding
\begin{align}
\mathscr{R}_0(\bpm, \alpha_1,\alpha_2; \lambda, \mu) 
&= q^{-\alpha_1\alpha_2}
\sum_{\gamma_2\le \alpha_2}
\nu^{\gamma_2}
\frac{(\lambda)_{\gamma_2}(\nu)_{|\alpha|-\gamma_2}}
{(\mu)_{|\alpha|}}
\binom{|\alpha|}{\gamma_2}_{\!q}
(\mu^{-1}\bp)_{\gamma_2}(q^{\gamma_2}\bp)_{|\alpha|-\gamma_2}\bm^{\alpha_1}\nonumber\\
&= q^{-\alpha_1\alpha_2}(\lambda^{-1}\bp)_{|\alpha|}\bm^{\alpha_1}\nonumber\\
&= (-1)^{\alpha_1}q^{\frac{1}{2}\alpha_1(\alpha_1-1)}
(\lambda^{-1}\bp)_{\alpha_2}
\sum_{t=0}^{\alpha_1}\lambda^{-t}(\lambda)_t
\binom{\alpha_1}{t}_{\!q}(q^{1-\alpha_1}W)_{\alpha_1-t},
\label{kyk}
\end{align}
where the second equality is due to Lemma \ref{le:hrk2}.
The third equality is obtained by applying Lemma \ref{le:hrk}
with $m, \mu$ and  $\bpm$ replaced by 
$\alpha_1, \lambda$ and  $(q^{\alpha_2}\lambda^{-1})^{\pm1}\bpm$, respectively.

To evaluate $\mathscr{R}_s(\bpm, \alpha_1,\alpha_2; \lambda, \mu)$ with $s\ge 1$, 
rewrite $\binom{\alpha_1}{\gamma_1}_{\!q}\binom{\gamma_1}{s}_{\!q}$ as
$\binom{\alpha_1-s}{\gamma_1-s}_{\!q}\binom{\alpha_1}{s}_{\!q}$ and then 
change $\gamma_1$ into $\gamma_1+s$.
By this procedure, the formula for 
$\mathscr{R}(\bpm, \alpha_1,\alpha_2; \lambda, \mu)$ gets replaced by
\begin{align}\label{obt}
\mathscr{R}(\bpm, \alpha_1,\alpha_2; \lambda, \mu)
= \mathscr{R}_0(\bpm, \alpha_1,\alpha_2; \lambda, \mu)-
\sum_{s=1}^{\alpha_1}\mathscr{R}_s(\bpm, \alpha_1,\alpha_2; \lambda, \mu).
\end{align}
As for the summand, comparing the resulting expression with 
$\mathscr{R}(q^{\pm s}\bpm, \alpha_1-s, \alpha_2; q^s\lambda, q^s\mu)$, we find
\begin{align}
\mathscr{R}_s(\bpm, \alpha_1,\alpha_2; \lambda, \mu) 
&= 
(-\lambda^{-1})^s(\lambda)_sq^{s\alpha_1-\frac{1}{2}s(s+1)}
\binom{\alpha_1}{s}_{\!q}
\mathscr{R}(q^{\pm s}\bpm, \alpha_1-s, \alpha_2;q^s\lambda, q^s\mu)\nonumber\\
&= (-\lambda^{-1})^s(\lambda)_sq^{s\alpha_1-\frac{1}{2}s(s+1)}
\binom{\alpha_1}{s}_{\!q}
\mathscr{L}(q^{\pm s}\bpm, \alpha_1-s, \alpha_2; q^s\lambda)\nonumber\\
&= (-1)^{\alpha_1}q^{\frac{1}{2}\alpha_1(\alpha_1-1)}(\lambda^{-1}\bp)_{\alpha_2}
\lambda^{-s}(\lambda)_s\binom{\alpha_1}{s}_{\!q}
(q^{1-\alpha_1}W)_{\alpha_1-s},
\label{mkr}
\end{align}
where the second equality is due to the induction assumption.
Now from (\ref{kyk}) and (\ref{mkr}) we see that the difference (\ref{obt}) 
leaves the $t=0$ term of (\ref{kyk}) only,  which exactly coincides with 
$\mathscr{L}(\bpm,\alpha_1,\alpha_2,\lambda)$, i.e., the LHS of (\ref{cie}).
\qed

\section{Summary and discussion}\label{sec:tmn}

We have studied the steady state probabilities of the 
$U_q(A^{(1)}_n)$-ZRPs \cite{KMMO}.
The main results are the attribution to the ZF algebra
and the auxiliary condition for general $n$ (Proposition \ref{pr:srn}), 
a concrete realization of them for $n=2$ (Theorem \ref{th:nzm}) 
and the resulting matrix product formulae in (\ref{mzs}) and (\ref{msm}).

They serve as a starting point for studying physical properties of the system. 
For instance, the RHS of (\ref{ldma}) 
is viewed as a naive measure of the {\em condensation} (cf. \cite{EH,GSS}).
Apart from the statistical factors $L$ and $L(L-1)/2$ 
for the relevant configurations,
the crude estimation
\begin{align*}
\frac{f_{|m|-r}f_r}{f_{|m|}} \simeq \frac{1-\mu}{1-q}\;\;(q\searrow 0),\qquad
\frac{f_{|m|-r}f_r}{f_{|m|}} \simeq 
\exp\left(\Bigl(\log\frac{1-q}{1-\mu}\Bigl)
\frac{(1-q^{r})(1-q^{|m|-r})}{\log q}\right)\;\;
(q \nearrow 1)
\end{align*} 
for $0 < r < |m|$ indicates that particles are more likely to condense
in the region $\mu > q$ than $\mu<q$. 

Another commonly undertaken approach is to switch to the  
grand canonical picture and  
investigate the generating function involving ``fugacity" $x, y$:
\begin{align*}
\mathcal{Z}(w,x,y) &=
\sum_{\sigma_1,\ldots, \sigma_L \in \Z^2_{\ge 0}}
\mathbb{P}_w(\sigma_1,\ldots, \sigma_L)x^{\sigma_{1,1}+\cdots + \sigma_{L,1}}
y^{\sigma_{1,2}+\cdots + \sigma_{L,2}}.
\end{align*}
Here $\mathbb{P}_w(\sigma_1,\ldots, \sigma_L)$ is  
a regularization of (\ref{mzs})  avoiding the
divergence at the non-basic sector $\sigma_{1,2}= \cdots = \sigma_{L,2}=0$.
An example of such a prescription is to insert a
boson-counter ${\bf h} = \log_q \bk$ (see (\ref{yrk})) into the trace as 
$\mathrm{Tr}\left(w^{\bf h} (\cdots) \right)$.
Then (\ref{mzs}) allows one to express it as
\begin{align*}
\mathcal{Z}(w,x,y) &= 
\mathrm{Tr}\left(w^{\bf h} V(\mu_1,x,y)\cdots V(\mu_L,x,y)\right),\\
V(\mu,x,y)& =\frac{(\bp)_\infty}{(\mu^{-1}\bp)_\infty}
\sum_{l,m\ge 0}
\frac{x^m y^l \mu^{-l-m}(\mu)_{l+m}}{(q)_l(q)_m}
\bk^l \bm^m
=\frac{(\bp)_\infty}{(\mu^{-1}\bp)_\infty}
\sum_{m\ge 0}
\frac{(x\mu^{-1})^{m}(\mu)_m}{(q)_m} 
\frac{(q^my\bk)_\infty}{(y\mu^{-1}\bk)_\infty}\bm^m\\
&= \frac{(\bp)_\infty}{(\mu^{-1}\bp)_\infty}
\Gamma(x\mu^{-1}, y\mu^{-1})^{-1}\Gamma(x,y),\qquad
\Gamma(x,y) = (x \bm)_\infty(y \bk)_\infty.
\end{align*}
Similarly in the 
homogeneous case $\mu_1=\cdots = \mu_L=\mu$, the result 
(\ref{msm}) 
corresponding to another normalization of 
$\mathbb{P}_w(\sigma_1,\ldots, \sigma_L)$
leads to the alternative form
\begin{align*}
\mathcal{Z}(w,x,y) = \mathrm{Tr}\left(w^{\bf h}\tilde{V}(\mu,x,y)^L\right),
\quad
\tilde{V}(\mu,x,y) =  \frac{(\mu \bp)_\infty}{(\bp)_\infty}
\Gamma(x,y)^{-1}\Gamma(x\mu, y\mu).
\end{align*}
This formula remains valid at $\mu=0$ 
and may be useful to extract the large $L$ asymptotics in the 
corresponding $(n=2)$-species $q$-boson model \cite{T}.

As noted in (\ref{ntk}), the formula (\ref{msm}) with $q=\mu=0$ agrees 
with the earlier result on the 
$(n=2)$-TAZRP \cite{KMO1, KMO2}
based on the combinatorial $R$ and 
the tetrahedron equation.
It is an interesting question how the approaches in the present paper 
and \cite{KMO1, KMO2} are related for general $n$.
We plan to address it in a future publication.

\section*{Acknowledgments}
The authors thank 
Ivan Corwin, Jan de Gier, Thomas Lam and Kirone Mallick 
for inspiring lectures at  Infinite Analysis 16 conference,  
{\em New Developments in Integrable Systems}, held 
at Osaka City University during 24-27 March 2016.
Thanks are also due to 
Vladimir Mangazeev and Shouya Maruyama
for collaboration in the previous work and 
Satoshi Watanabe for a kind interest.
This work is supported by 
Grants-in-Aid for Scientific Research No.~15K04892,
No.~15K13429 and No.~23340007 from JSPS.

\end{document}